\newtheorem{theorem}{Theorem}[section]
\newtheorem{lemma}[theorem]{Lemma}
\newtheorem{proposition}[theorem]{Proposition}
\DeclareMathOperator{\gal}{Gal}
\DeclareMathOperator{\lcm}{lcm}
\title{Distinct differences of singular moduli}
\author{Guy Fowler}
\address{\parbox{\linewidth}{Department of Mathematics, University of Manchester, M13 9PL, UK\\ Heilbronn Institute for Mathematical Research, Bristol, UK}}
\email{guy.fowler@manchester.ac.uk}
\author{Emanuele Tron}
\address{\parbox{\linewidth}{Institut de mathématiques de Bordeaux, Université de Bordeaux\\351 cours de la Libération, 33405 Talence, France}}
\email{emanuele.tron@math.u-bordeaux.fr}
\subjclass{11G18, 14G35}
\begin{document}
	
	\begin{abstract}
		Let $E_1, E_2 / \mathbb{C}$ be non-isomorphic elliptic curves with complex multiplication. We prove that the pair $(E_1, E_2)$ is characterised, up to isomorphism, by the difference $j(E_1) - j(E_2)$ of the respective $j$-invariants. In other words, we show that if $x_1, x_2, x_3, x_4$ are singular moduli such that $x_1 - x_2 = x_3 - x_4$, then either $(x_1, x_2) = (x_3, x_4)$ or $(x_1, x_3) = (x_2, x_4)$.
	\end{abstract}	
	
	\maketitle
	
	\section{Introduction}
	
	Let $E$ be an elliptic curve over $\mathbb{C}$. Then $E$ is characterised, up to isomorphism, by its $j$-invariant $j(E)$. In this article, we will prove that if $E_1, E_2 / \mathbb{C}$ are non-isomorphic elliptic curves that both have complex multiplication (CM), then the pair $(E_1, E_2)$ is characterised, up to isomorphism, by the difference $j(E_1) - j(E_2)$ of the respective $j$-invariants. The $j$-invariant of a CM elliptic curve is called a singular modulus. Our main result is thus the following statement.
	
	\begin{theorem}\label{thm:main}
	Let $x_1, x_2, x_3, x_4$ be singular moduli. Then
		\[x_1 - x_2 = x_3 - x_4\]
		if and only if either $(x_1, x_2) = (x_3, x_4)$ or $(x_1, x_3) = (x_2, x_4)$.
	\end{theorem}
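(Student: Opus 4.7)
Rewriting the hypothesis as $x_1 + x_4 = x_2 + x_3$, I first dispose of trivial coincidences: if $x_1 = x_2$ then both sides vanish, forcing $x_3 = x_4$ and landing in the second alternative; otherwise, if $x_1 = x_3$ the equation forces $x_2 = x_4$, the first alternative. So I may assume $x_1, x_2$ are distinct, and that $x_1 \neq x_3$ and $x_2 \neq x_4$; the goal is to derive a contradiction. Let $\Delta_i$ denote the discriminant of the CM order attached to $x_i$, so that $x_i$ generates the ring class field $H_{\Delta_i}$ over the imaginary quadratic field $K_i = \mathbb{Q}(\sqrt{\Delta_i})$ with $[\mathbb{Q}(x_i):\mathbb{Q}] = h(\Delta_i)$. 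My strategy is to split the argument according to the multiset $\{\Delta_1, \Delta_2, \Delta_3, \Delta_4\}$, combining Archimedean size estimates with Galois theory of ring class fields.

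The first main tool is the estimate $\log |j(\tau)| = \pi\sqrt{|\Delta|}/a + O(1)$ with absolute implied constant, valid for a reduced CM point $\tau$ of discriminant $\Delta$ and leading coefficient $a$. The principal form ($a=1$) yields the uniquely dominant singular modulus at each discriminant, with an exponential gap of size roughly $e^{\pi\sqrt{|\Delta|}/2}$ to the next. Picking an embedding of the compositum of the $H_{\Delta_i}$ that sends $x_1$ to the principal singular modulus of the largest discriminant appearing, the equation $x_1 + x_4 = x_2 + x_3$ forces one of $x_2, x_3$ to match its magnitude, hence to share the same discriminant and also to be the principal modulus. Iterating this balancing under Galois conjugation reduces matters to $\Delta_1 = \Delta_2$ and $\Delta_3 = \Delta_4$, possibly all equal.

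If $\Delta_1 = \Delta_2 = \Delta_3 = \Delta_4 = \Delta$, all four $x_i$ are Galois conjugates in $H_\Delta$ and I would attack this via the form class group action: conjugating by a non-trivial class element breaks the balance of $x_1 + x_4 = x_2 + x_3$ under the sharp exponential hierarchy unless the multiset is trivially arranged. If instead $\Delta_1 = \Delta_2 \neq \Delta_3 = \Delta_4$, the common value $c := x_1 - x_2 = x_3 - x_4$ lies in $H_{\Delta_1} \cap H_{\Delta_3}$; by class field theory this intersection sits in the ring class field of $K_1 \cap K_3$, which is $\mathbb{Q}$ whenever $K_1 \neq K_3$. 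Combining the resulting rationality constraint on $c$ with the fact that $\gal(H_{\Delta_1}/K_1)$ moves $x_1 - x_2$ non-trivially unless the configuration comes from a specific involution, I expect to force a contradiction; when $K_1 = K_3$ but $\Delta_1 \neq \Delta_3$ the analogous analysis uses the containment $H_{\Delta_1} \cap H_{\Delta_3} = H_{\gcd}$ inside $K_1$'s tower.

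The main obstacle I anticipate is calibrating the Archimedean estimates sharply enough to force the pairing of discriminants in the borderline regime where $|\Delta_i|$ are close but unequal and the leading coefficients $a$ vary in a coordinated way; the clean exponential gap may be narrowed by subdominant conjugates of intermediate size. The complementary regime of small discriminants — below whatever effective threshold the analytic argument produces — will need to be handled by explicit enumeration of singular moduli of bounded discriminant, presumably computer assisted, to verify that no sporadic additive coincidence occurs there. Of the two genuinely arithmetic cases, I expect $\Delta_1 = \Delta_2 \neq \Delta_3 = \Delta_4$ with $K_1 = K_3$ to be the deepest, since neither pure field disjointness nor pure size dominance trivialises it.
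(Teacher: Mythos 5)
Your reduction of the trivial coincidences is incomplete and, more importantly, the two pillars of your strategy each contain a genuine gap. First, the claim that iterating the Archimedean ``balancing'' under Galois conjugation reduces everything to $\Delta_1=\Delta_2$ and $\Delta_3=\Delta_4$ does not hold. Dominance forces one of $x_2,x_3$ to have $|\Delta|$ \emph{close} to $|\Delta_1|$, but dominant singular moduli of \emph{distinct} nearby discriminants have magnitudes $e^{\pi|\Delta|^{1/2}}$ and $e^{\pi|\Delta'|^{1/2}}$ whose ratio tends to $1$ as $|\Delta|\to\infty$ when $|\Delta|-|\Delta'|$ is bounded; there is no exponential gap between them, so size estimates cannot identify the discriminants. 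The genuinely hard configurations are precisely those in which two or three of the conjugated $x_i$ are simultaneously dominant with pairwise distinct discriminants of comparable size, and these cannot be closed analytically: one must produce a Galois automorphism that moves one of the offending singular moduli (destroying its dominance) while fixing another, and the existence of such automorphisms is an arithmetic, not an analytic, fact.

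That fact is exactly where your second claim fails: it is \emph{not} true that $H_{\Delta_1}\cap H_{\Delta_3}=\mathbb{Q}$ when $K_1\neq K_3$. Genus theory already produces common subfields: for instance $\sqrt{5}$ lies in both the ring class field of discriminant $-20$ (over $\mathbb{Q}(\sqrt{-5})$) and that of discriminant $-40$ (over $\mathbb{Q}(\sqrt{-10})$). The correct statement is Cohn's theorem, that $\gal(K_1^{\mathrm{tf}}\cap K_1K_2^{\mathrm{tf}}/K_1)$ is $2$-elementary; to exploit it one must bound the $2$-elementary discriminants, which requires Tatuzawa's effective version of Siegel's theorem and leaves a possible exceptional fundamental discriminant $D_*$ that has to be eliminated by separate estimates, plus a finite computer search over the surviving small discriminants. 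None of this machinery appears in your sketch, and without it the case of distinct fundamental discriminants does not close. Finally, you never treat the degenerate case $x_1=x_4$ (equivalently $x_2=x_3$), which reduces to $2x_1=x_2+x_3$ with three pairwise distinct singular moduli; this lies outside both target alternatives and is not trivial — it needs the primitive-element theorem for differences of singular moduli together with explicit computation.
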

	
	\medskip
	
	The classical theory of complex multiplication shows that singular moduli are algebraic integers which generate the ring class fields of imaginary quadratic fields. Gross and Zagier \cite{GrossZagier85} proved, under some technical hypotheses, a factorisation formula for the absolute norm of the difference of two singular moduli. Their result shows that such differences are ``highly divisible'', in the sense that the prime factors of the norm are very small compared to the size of the norm. This was generalised to non-fundamental discriminants by Lauter and Viray \cite{LauterViray15}.
	
Known results on singular moduli also suggest that many properties that singular moduli enjoy ought to be shared more generally with differences of two singular moduli. Some examples of interest to us:
	\begin{itemize}
	\item Singular moduli are never units in the ring of algebraic integers \cite[Th.~1.1]{BiluHabeggerKuehne20}, and neither are differences of singular moduli \cite[Cor.~1.3]{Li21}.
	\item Singular moduli are hardly ever $S$-units \cite[Cor.~1.1]{Campagna20}; the same goes for differences of singular moduli \cite[Th.~1]{HerreroMenaresRiveraletelier24}.
	\item Multiplicative relations among singular moduli are rare \cite[Th.~1.2]{PilaTsimerman17}, and similarly so are multiplicative relations among differences of singular moduli \cite[Th.~1.5]{AslanyanEterovicFowler23}, \cite[Cor.~1.5]{FuZhao24}.
	\end{itemize}
	
	\medskip
	
	We identify the modular curve $Y(1) = \mathrm{SL}(2, \mathbb{Z}) \backslash \mathbb{H}$ with the affine line over $\mathbb{C}$ by means of the $j$-invariant. We may then endow $\mathbb{C}^n$ with the structure of a Shimura variety in this way. A point $(x_1, \ldots, x_n) \in \mathbb{C}^n$ such that $x_1, \ldots, x_n$ are all singular moduli is a special point (i.e.~a zero-dimensional special subvariety) of $\mathbb{C}^n$. For a description of the positive-dimensional special subvarieties, see e.g.~\cite[Def.~1.3]{Pila11}. Solutions in singular moduli to polynomial equations therefore correspond to special points of subvarieties of $\mathbb{C}^n$.
	
	The Andr\'e--Oort conjecture for $\mathbb{C}^n$, proved by Pila \cite{Pila11}, states that a subvariety $W \subset \mathbb{C}^n$ contains only finitely many maximal special subvarieties. In particular, a subvariety $W$ contains only finitely many special points outside the union of the positive-dimensional special subvarieties of $W$. Theorem~\ref{thm:main} establishes this in completely explicit form for the subvariety $W \subset \mathbb{C}^4$ given by $W = \mathbb{V}(x - y - w + z)$. Theorem~\ref{thm:main} shows that the two positive-dimensional special subvarieties $\mathbb{V}(x - y, w - z)$ and $\mathbb{V}(x - w, y - z)$ of $W$ contain all the special points of $W$.
	
	Let $\alpha \in \overline{\mathbb{Q}} \setminus \{0\}$. That there are only finitely many special points $(x_1, x_2) \in \mathbb{C}^2$ such that $x_1 - x_2 = \alpha$ is a special case of Andr\'e's theorem \cite{Andre98}.	Pila's proof \cite[Th.~13.2]{Pila11} of the Andr\'e--Oort conjecture for $\mathbb{C}^n$ implies that the number of special points $(x_1, x_2) \in \mathbb{C}^2$ such that $x_1 - x_2 = \alpha$ may be bounded by an ineffective constant which depends only on $[\mathbb{Q}(\alpha) : \mathbb{Q}]$. K\"uhne \cite[Th.~4]{Kuhne13} subsequently proved that this constant could be determined effectively; Edixhoven \cite[Remark~7.1]{Edixhoven98} (cf. \cite[Th.~3.1]{Breuer01}) had previously shown this conditionally under GRH. Scanlon's principle of automatic uniformity, coupled with Pila's proof of Andr\'e--Oort, implies \cite[Th.~4.2]{Scanlon04} that the number of special points $(x_1, x_2) \in \mathbb{C}^2$ such that $x_1 - x_2 = \alpha$ is in fact bounded by an absolute (i.e.~independent of $\alpha$) constant. The same result also follows from \cite[Th.~1.1]{BiluLucaMasser17}; both proofs are ineffective.
	
	A subsequent result of Pila \cite[Th.~1.3]{Pila14a} implies that there are only finitely many special points $(x_1, x_2, x_3, x_4) \in \mathbb{C}^4$ such that $x_1 - x_2 = x_3 - x_4$, but $(x_1, x_2) \neq (x_3, x_4)$ and $(x_1, x_3) \neq (x_2, x_4)$. In other words, there are only finitely many $\alpha \in \overline{\mathbb{Q}}$ for which there exist at least two distinct special points $(x_1, x_2) \in \mathbb{C}^2$ such that $x_1 - x_2 = \alpha$. This result was again ineffective.
	
	Bilu and K\"uhne \cite{BiluKuhne20} proved the Andr\'e--Oort conjecture effectively for linear subvarieties of $\mathbb{C}^n$. In particular, their result \cite[Th.~1.1]{BiluKuhne20} implies that if $(x_1, x_2, x_3, x_4) \in \mathbb{C}^4$ is a special point such that $x_1 - x_2 = x_3 - x_4$, but $(x_1, x_2) \neq (x_3, x_4)$ and $(x_1, x_3) \neq (x_2, x_4)$, then $ \max \left\{ \left\lvert \Delta_i \right\rvert : i=1, 2, 3, 4 \right\} \leq 4.22 \cdot 10^{87}$. Here $\Delta_i$ denotes the discriminant (defined in Section~\ref{subsec:singmod}) of the singular modulus $x_i$. The constant here is far too large for it to be feasible to deduce Theorem~\ref{thm:main} by a computation (at least with currently available resources). We therefore prove Theorem~\ref{thm:main} directly, without appealing to the main result of \cite{BiluKuhne20}. 
	
	Explicit forms of the Andr\'e--Oort conjecture are known for certain curves in $\mathbb{C}^2$ and surfaces in $\mathbb{C}^3$, e.g.~\cite{AllombertBiluMadariaga15, BiluGunTron22, Fowler23}. Theorem~\ref{thm:main} is, to our knowledge, the first example of such a result for a subvariety of dimension $3$ in $\mathbb{C}^4$.
	
	All computations in this paper were performed using the open-source computer algebra system PARI/GP \cite{PARI2}; the scripts are available from: \url{https://github.com/guyfowler/distinct_differences}.

	\section{Background}
	
	\subsection{Singular moduli}\label{subsec:singmod}
	
The discriminant $\Delta$ of a singular modulus $x$ is defined to be the discriminant of the endomorphism order of an elliptic curve $E / \mathbb{C}$ with $j(E) = x$. In particular, $\Delta$ is a negative integer satisfying $\Delta \equiv 0, 1 \bmod 4$. We may write $\Delta = f^2 D$, where $D$ is the discriminant of the quadratic imaginary field $\mathbb{Q}(\sqrt{\Delta})$ and $f \in \mathbb{Z}_{>0}$. Call $D$ the fundamental discriminant of $x$ and $f$ the conductor of $x$. 
	
	Singular moduli are algebraic integers. The singular moduli of a given discriminant form a complete Galois orbit over $\mathbb{Q}$, see e.g.~\cite[Prop.~13.2]{Cox89}. The number of distinct singular moduli of discriminant $\Delta$ is equal to the class number of $\Delta$, which we denote $h(\Delta)$. 
	
	Let $j \colon \mathbb{H} \to \mathbb{C}$ denote the modular $j$-function, which has the $q$-expansion
	\[ j(z) = \frac{1}{q} + 744 + \sum_{n \geq 1} c(n) q^n,\]
	where $c(n) \in \mathbb{Z}$ for all $n$ and $q = \exp(2 \pi i z)$. The map
	\[ (a, b, c) \mapsto j\left(\frac{-b + \lvert \Delta \rvert^{1/2} i}{2a}\right)\]
	gives a bijection between the set
	\begin{align*} 
	\{\left(a, b, c\right) \in \mathbb{Z}^3 : \, & b^2 - 4ac = \Delta, \gcd\left(a, b, c\right) = 1, \mbox{ and} \\
	& \mbox{ either } -a < b \leq a < c \mbox{ or } 0 \leq b \leq a = c \}
	\end{align*}
	and the set of singular moduli of discriminant $\Delta$. Given a singular modulus $x$, the denominator of $x$ is defined to be the integer $a$ in the corresponding triple $(a, b, c)$. This well-known description of the singular moduli of a given discriminant corresponds to taking, for each singular modulus, its unique preimage (for $j$) in the standard fundamental domain for the action of $\mathrm{SL}(2, \mathbb{Z})$ on $\mathbb{H}$.
	
	For each $\Delta < 0$ such that $\Delta \equiv 0, 1 \bmod 4$, there exists a unique singular modulus with discriminant $\Delta$ and denominator $1$, which is called the dominant singular modulus of discriminant $\Delta$. The dominant singular modulus is given explicitly by
	\[ j\left(\frac{-k + \lvert \Delta \rvert^{1/2} i}{2}\right)\]
	where $k \in \{0, 1\}$ is such that $k \equiv \Delta \bmod 4$. It follows from the $q$-expansion of the $j$-function that $j(z) \in \mathbb{R}$ if $\Re (z) \in \{-1/2, 0\}$. Consequently, the dominant singular modulus of a given discriminant is always real. Moreover, for $x$ the dominant singular modulus of discriminant $\Delta$, one has that $x>0$ if and only if $\Delta \equiv 0 \bmod 4$.
	
	The following bound is also a consequence of the $q$-expansion. Observe that both the lower and upper bounds are monotonically increasing in $\lvert \Delta \rvert$ and monotonically decreasing in $a$.
	
	\begin{lemma}\label{lem:bdsing}
		Let $x$ be a singular modulus of discriminant $\Delta$ and denominator $a$. Then
		\[ e^{\pi \lvert \Delta \rvert^{1/2}/a} - 2079 \leq \lvert x \rvert \leq e^{\pi \lvert \Delta \rvert^{1/2}/a} + 2079.\]
	\end{lemma}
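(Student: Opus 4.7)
The plan is to read off the inequality from the $q$-expansion of $j$. Write $x = j(z)$, where $z = (-b + i |\Delta|^{1/2})/(2a)$ is the preimage of $x$ in the standard fundamental domain, so that $q := e^{2\pi i z}$ satisfies $|q| = e^{-\pi |\Delta|^{1/2}/a}$, i.e.\ $|1/q| = e^{\pi |\Delta|^{1/2}/a}$. The $q$-expansion gives
$$j(z) - \frac{1}{q} = 744 + \sum_{n \geq 1} c(n)\, q^n,$$
and the reverse triangle inequality yields
$$\bigl| |x| - e^{\pi |\Delta|^{1/2}/a} \bigr| \;\leq\; |j(z) - 1/q| \;\leq\; 744 + \sum_{n \geq 1} c(n)\, |q|^n,$$
using that $c(n) > 0$ for all $n \geq 1$. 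It therefore suffices to show that this final sum is at most $2079$, uniformly over the admissible $z$.

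Next, since $z$ lies in the standard fundamental domain, $\Im(z) \geq \sqrt{3}/2$, so $|q| \leq q_0 := e^{-\pi\sqrt{3}} \approx 4.33 \times 10^{-3}$, and the task reduces to bounding the fixed numerical quantity $744 + \sum_{n \geq 1} c(n)\, q_0^n$. I would evaluate the first handful of terms directly from the known values $c(1) = 196884$, $c(2) = 21493760$, $c(3) = 864299970$, etc., obtaining contributions of roughly $853, 403, 70, 7, 0.5, \ldots$, and control the remainder using a classical growth bound for the Fourier coefficients of $j$, for instance $c(n) \leq e^{4\pi\sqrt{n}}$ valid for all $n \geq 1$. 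Since $4\pi\sqrt{n} - n\pi\sqrt{3}$ turns negative around $n = 7$ and then decreases rapidly, the tail beyond a modest cut-off is negligible, and the total lands just under $2079$.

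The one delicate point is that the constant $2079$ is essentially tight: the worst case is precisely $|\Delta|/a^2 = 3$, where the explicit leading contributions already add up to approximately $1335 = 2079 - 744$. Consequently one cannot afford to be wasteful in the tail estimate; in particular, applying the crude bound $c(n) \leq e^{4\pi\sqrt{n}}$ from $n = 1$ onwards does not suffice, since it already exceeds the budget at $n = 1$. The correct strategy is to first peel off a sufficient initial segment of terms, using exact values of $c(n)$, and only then invoke a growth bound on the tail.
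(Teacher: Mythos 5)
Your argument is correct, but it is worth noting how it relates to what the paper actually does: the paper's entire proof is a one-line citation of \cite[Lemma~1]{BiluMasserZannier13} (which asserts exactly $\bigl\lvert \lvert j(z)\rvert - e^{2\pi \Im z}\bigr\rvert \leq 2079$ for $z$ in the standard fundamental domain), combined with the parametrisation $z = (-b + i\lvert\Delta\rvert^{1/2})/(2a)$, so $\Im z = \lvert\Delta\rvert^{1/2}/(2a)$. What you have written is, in effect, a reconstruction of the proof of that cited lemma rather than of the paper's (trivial) deduction from it. Your reconstruction is sound: the reverse triangle inequality reduces everything to bounding $744 + \sum_{n\geq 1} c(n)\lvert q\rvert^n$, the reduction to the single worst case $\lvert q\rvert \leq e^{-\pi\sqrt{3}}$ is legitimate because the claimed additive constant is uniform in $a$ and $\Delta$ and each term is increasing in $\lvert q\rvert$, and the numerics check out (the first seven terms already give $\approx 2078.4$, and the tail controlled via $c(n) \leq e^{4\pi\sqrt{n}}$, which holds for all $n \geq 1$ by the Petersson--Rademacher asymptotic made explicit, contributes less than $10^{-3}$). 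Your closing remark about the tightness of $2079$ and the need to peel off exact initial coefficients before invoking the crude growth bound is exactly the right observation. The only thing you lose relative to the paper is economy; the only thing you gain is self-containedness.
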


\begin{proof}
	This is an immediate consequence of \cite[Lemma~1]{BiluMasserZannier13} and the above description of the singular moduli of a given discriminant.
\end{proof}

\begin{lemma}\label{lem:bdfund}
Let $\Delta = f^2 D$ be a discriminant with fundamental discriminant $D$ and conductor $f$. Let $x$ be a singular modulus with fundamental discriminant $D$ and conductor $g$. If $g<f$, then 
\[ \lvert x \rvert \leq e^{-\pi \lvert D \rvert^{1/2}} e^{\pi \lvert \Delta \rvert^{1/2}} + 2079 \leq 0.005 e^{\pi \lvert \Delta \rvert^{1/2}} + 2079.\]
\end{lemma}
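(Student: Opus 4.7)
The plan is to derive Lemma~\ref{lem:bdfund} as a direct consequence of Lemma~\ref{lem:bdsing}, writing everything in terms of $|D|^{1/2}$ and exploiting the integrality of the conductors.

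First, I note that $x$ is a singular modulus of discriminant $g^2 D$, so $|g^2 D|^{1/2} = g |D|^{1/2}$, and similarly $|\Delta|^{1/2} = f |D|^{1/2}$. Let $a \geq 1$ be the denominator of $x$. Applying Lemma~\ref{lem:bdsing} directly gives
\[
\lvert x \rvert \;\leq\; e^{\pi g |D|^{1/2}/a} + 2079.
\]
Since $f$ and $g$ are positive integers with $g < f$, we have $g \leq f-1$, and since $a \geq 1$ we get $g/a \leq f - 1$. Thus
\[
e^{\pi g |D|^{1/2}/a} \;\leq\; e^{\pi (f-1) |D|^{1/2}} \;=\; e^{-\pi |D|^{1/2}} e^{\pi f |D|^{1/2}} \;=\; e^{-\pi |D|^{1/2}} e^{\pi |\Delta|^{1/2}},
\]
which yields the first bound.

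For the second inequality, I would use the fact that since $D$ is a (negative) fundamental discriminant, the smallest possible value of $|D|$ is $3$. Hence $e^{-\pi |D|^{1/2}} \leq e^{-\pi \sqrt{3}}$, and a direct numerical computation shows $e^{-\pi \sqrt{3}} < 0.005$, concluding the proof.

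There is no real obstacle here; the whole statement is essentially a repackaging of Lemma~\ref{lem:bdsing} once one notices that the factor $g/a$ in the exponent is always at most $f-1$ by integrality, and that $|D| \geq 3$ gives the clean numerical constant $0.005$.
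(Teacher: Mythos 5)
Your proof is correct and follows essentially the same route as the paper's: apply Lemma~\ref{lem:bdsing}, use $g \leq f-1$ (the paper cites the argument of Faye--Riffaut for exactly this step) to get $e^{\pi g\lvert D\rvert^{1/2}} \leq e^{-\pi\lvert D\rvert^{1/2}}e^{\pi\lvert\Delta\rvert^{1/2}}$, and then bound $e^{-\pi\sqrt{3}} = 0.0043\ldots < 0.005$ using $\lvert D\rvert \geq 3$. No differences worth noting.
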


\begin{proof}
    We follow the argument in \cite[\S4.2]{FayeRiffaut18}. Observe that
    \[e^{\pi g \lvert D \rvert^{1/2}} \leq e^{\pi (f - 1)\lvert D \rvert^{1/2}} \leq e^{-\pi \lvert D \rvert^{1/2}} e^{\pi \lvert \Delta \rvert^{1/2}} \leq e^{-\pi \sqrt{3}} e^{\pi \lvert \Delta \rvert^{1/2}}, \]
since $\lvert D \rvert \geq 3$. The result then follows from Lemma~\ref{lem:bdsing}, since $e^{-\pi \sqrt{3}} = 0.0043\ldots$
\end{proof}

\begin{proposition}[{\cite[Lemmata~5.1 \& 5.3]{BiluLucaMasser17}}]\label{prop:dombig}
Let $x_1, x_2$ be distinct singular moduli of respective discriminants $\Delta_1, \Delta_2$. Suppose that $\lvert \Delta_2 \rvert \leq \lvert \Delta_1 \rvert$. If $x_1$ is dominant, then $\lvert x_2 \rvert < \lvert x_1 \rvert$.
\end{proposition}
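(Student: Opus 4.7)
The plan is to apply Lemma~\ref{lem:bdsing} with $a_1 = 1$ (using that $x_1$ is dominant) to obtain the lower bound $\lvert x_1 \rvert \geq e^{\pi \lvert \Delta_1 \rvert^{1/2}} - 2079$, and then bound $\lvert x_2 \rvert$ from above by splitting on the denominator $a_2$ of $x_2$.

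First I would handle the case $a_2 \geq 2$. By Lemma~\ref{lem:bdsing} together with the hypothesis $\lvert \Delta_2 \rvert \leq \lvert \Delta_1 \rvert$ and the monotonicity noted after it, one gets $\lvert x_2 \rvert \leq e^{\pi \lvert \Delta_1 \rvert^{1/2}/2} + 2079$. Setting $u = e^{\pi \lvert \Delta_1 \rvert^{1/2}/2}$, the desired inequality $\lvert x_2 \rvert < \lvert x_1 \rvert$ reduces to $u^2 - u > 4158$, which holds once $u > 65$, i.e., once $\lvert \Delta_1 \rvert$ exceeds a small explicit threshold. The finitely many smaller discriminants (for which only a handful of singular moduli with $a_2 \geq 2$ even exist) can be dispatched by direct computation in PARI/GP.

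Next I would handle the case $a_2 = 1$, in which $x_2$ is itself the dominant singular modulus of $\Delta_2$. Since dominance is unique per discriminant and $x_1 \neq x_2$, we must have $\Delta_2 \neq \Delta_1$, hence $\lvert \Delta_2 \rvert \leq \lvert \Delta_1 \rvert - 1$. The needed inequality then becomes
\[ e^{\pi \lvert \Delta_1 \rvert^{1/2}} - e^{\pi (\lvert \Delta_1 \rvert - 1)^{1/2}} > 4158. \]
Factoring out $e^{\pi (\lvert \Delta_1 \rvert - 1)^{1/2}}$ and using the elementary estimate $\lvert \Delta_1 \rvert^{1/2} - (\lvert \Delta_1 \rvert - 1)^{1/2} \geq 1/(2 \lvert \Delta_1 \rvert^{1/2})$ together with $e^t - 1 \geq t$, the inequality holds as soon as $e^{\pi (\lvert \Delta_1 \rvert - 1)^{1/2}} \cdot \pi / (2 \lvert \Delta_1 \rvert^{1/2}) > 4158$, which is easily verified for all but a short initial range of $\lvert \Delta_1 \rvert$; the remaining cases are again settled by listing the dominant singular moduli explicitly.

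I expect the second case to be the main obstacle, since there the two exponents $\pi \lvert \Delta_1 \rvert^{1/2}$ and $\pi \lvert \Delta_2 \rvert^{1/2}$ can be arbitrarily close as $\lvert \Delta_1 \rvert$ grows, so the crude additive error $\pm 2079$ in Lemma~\ref{lem:bdsing} is relatively more costly. The saving is that the leading exponential $e^{\pi (\lvert \Delta_1 \rvert - 1)^{1/2}}$ still dwarfs the $O(\lvert \Delta_1 \rvert^{-1/2})$ narrowing of the gap, so the asymptotic argument goes through cleanly and leaves only a small explicit finite check.
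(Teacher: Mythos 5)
Your argument is correct and is essentially the standard one: the paper does not prove this proposition itself but quotes it from Bilu--Luca--Masser, whose Lemmata~5.1 and 5.3 rest on exactly this comparison of the $q$-expansion bounds of Lemma~\ref{lem:bdsing}, splitting on whether the denominator of $x_2$ equals $1$ (so that $x_2$ is dominant of a strictly smaller discriminant) or is at least $2$. One minor simplification: in your first case no finite computation is needed at all, since $a_2 \geq 2$ forces $\lvert \Delta_2 \rvert \geq 15$, hence $\lvert \Delta_1 \rvert \geq 15$ and $e^{\pi \lvert \Delta_1 \rvert^{1/2}/2} > 65$, so the quadratic inequality $u^2 - u > 4158$ holds outright.
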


	\subsection{An analytic estimate}

	\begin{lemma}\label{lem:small} Let $d \in \mathbb{R}$. If $d \geq 2 $, then
		\[ e^{\pi \sqrt{d}}-e^{\pi \sqrt{d-1}} > e^{\pi \sqrt{d}}/\sqrt{d}. \]	
	\end{lemma}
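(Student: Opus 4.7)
My plan is to reduce the inequality to a one-variable statement and then use calculus. Dividing both sides by $e^{\pi\sqrt{d-1}} > 0$ and setting $s = \sqrt{d} \geq \sqrt{2}$ and $u = s - \sqrt{s^2-1} = 1/(s+\sqrt{s^2-1})$, the claim becomes $e^{\pi u} > s/(s-1)$, or equivalently
\[ \phi(s) := e^{\pi u}\,(s-1)/s > 1 \qquad \text{for all } s \in [\sqrt{2}, \infty). \]

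I would start by examining the boundary values. Expanding $\phi$ in powers of $1/s$ gives $\phi(s) = 1 + (\pi-2)/(2s) + O(1/s^2)$, so $\phi(s) \to 1^+$ as $s \to \infty$. At $s = \sqrt{2}$, the condition $\phi(\sqrt{2}) > 1$ is equivalent to $e^{\pi(\sqrt{2}-1)} > 2 + \sqrt{2}$, which I would verify by truncating the Maclaurin series of $e^x$ at order $5$: at $x = \pi(\sqrt{2}-1) \approx 1.30$ the first five terms already sum to about $3.63$, comfortably larger than $2 + \sqrt{2} \approx 3.41$.

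The crux of the argument is to show that $\phi$ is unimodal on $[\sqrt{2},\infty)$. Logarithmic differentiation gives
\[ \frac{\phi'(s)}{\phi(s)} = \frac{1 - \psi(s)}{s(s-1)}, \qquad \psi(s) := \pi \cdot \frac{u\,s(s-1)}{\sqrt{s^2-1}}, \]
so unimodality reduces to strict monotonicity of $\psi$. For this I would substitute $s = \cosh\theta$ (with $\theta \geq 0$), under which $u = e^{-\theta}$, and use the half-angle identity $(\cosh\theta - 1)/\sinh\theta = \tanh(\theta/2)$ to rewrite $\psi$ as $(\pi/2)(1 + e^{-2\theta})\tanh(\theta/2)$. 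After differentiating and clearing denominators, strict positivity of $d\psi/d\theta$ reduces, on setting $x = e^{-\theta} \in (0, 1]$, to the polynomial inequality $1 - x + x^2 + x^3 > 0$, which is elementary (its minimum on $[0,1]$ is $22/27$, attained at $x = 1/3$).

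Putting this together: $\psi$ strictly increases from $\psi(\sqrt{2}) = \pi(3\sqrt{2} - 4) < 1$ to $\lim_{s \to \infty} \psi(s) = \pi/2 > 1$, so there is a unique $s_0$ with $\psi(s_0) = 1$; hence $\phi$ strictly increases on $[\sqrt{2}, s_0]$ and strictly decreases on $[s_0, \infty)$. Combined with $\phi(\sqrt{2}) > 1$ and $\phi(s) \to 1^+$ at infinity, this forces $\phi(s) > 1$ throughout. The main technical obstacle I anticipate is establishing the monotonicity of $\psi$: without the hyperbolic substitution the derivative $\psi'(s)$ is unpleasant, whereas after substitution the problem collapses to the elementary polynomial inequality above.
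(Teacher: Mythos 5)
Your proof is correct, but it takes a genuinely different route from the paper's. The paper works with $f(x)=\sqrt{x}\,\bigl(1-\exp(\pi(\sqrt{x-1}-\sqrt{x}))\bigr)$, bounds it below via $\sqrt{x-1}+\sqrt{x}\le 2\sqrt{x}$ and the quadratic truncation $e^{-y}<1-y+y^2/2$ to get $f(x)\ge \pi/2-\pi^2/(8\sqrt{x})$, which settles $x\ge 5$ in three lines, and then disposes of $2\le x<5$ by a direct check. You instead prove the sharper structural fact that $\phi(s)=e^{\pi u}(s-1)/s$ is unimodal on $[\sqrt{2},\infty)$ with limit $1$ at infinity, via the substitution $s=\cosh\theta$ (under which $u=e^{-\theta}$ and the messy derivative of $\psi$ collapses to $\pi x(1-x+x^2+x^3)/(1+x)^2$ with $x=e^{-\theta}$); I checked the logarithmic derivative identity, the value $\psi(\sqrt 2)=\pi(3\sqrt 2-4)<1$, the limit $\pi/2>1$, and the endpoint inequality $e^{\pi(\sqrt2-1)}>2+\sqrt2$, and all are right. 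What your approach buys is a uniform analytic argument over the whole range $d\ge 2$ with only a single endpoint evaluation left to verify (and your Maclaurin truncation handles that rigorously, since the series has positive terms); the cost is a longer chain of computations and the need for the unique critical point $s_0$. The paper's argument is shorter and more robust to perturbation of the constants, at the price of an unexplained ``checked directly'' step on a whole interval. One minor remark: your asymptotic expansion $\phi(s)=1+(\pi-2)/(2s)+O(1/s^2)$ is a useful sanity check but is not actually needed — strict decrease on $[s_0,\infty)$ together with $\lim_{s\to\infty}\phi(s)=1$ already forces $\phi>1$ there.
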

	\begin{proof}
	For $x>1$, let
		\[ f(x) = \sqrt{x} \left(1 - \exp(\pi (\sqrt{x-1} - \sqrt{x}))\right).\]
		Note that
		\[ \sqrt{x-1} - \sqrt{x} = \frac{-1}{\sqrt{x-1} + \sqrt{x}},\]
		so
		\[ f(x) = \sqrt{x} \left(1 - \exp\left(\frac{-\pi}{\sqrt{x-1} + \sqrt{x}}\right)\right)  \geq \sqrt{x}\left(1 - \exp\left(\frac{-\pi}{2 \sqrt{x}}\right)\right).\]
		Recall that
		\[ e^{-y} < 1 - y +\frac{y^2}{2}\]
		for every $y>0$. Hence, for $x> 1$,
		\[ f(x) \geq \frac{\pi}{2} - \frac{\pi^2}{8 \sqrt{x}}.\]
		In particular, if $x \geq 5$, then $f(x) > 1$ as required. For $2 \leq x < 5$, the stated inequality is checked directly.
	\end{proof}

	\section{Some class field theory}
	
	\subsection{Ring class fields}
	
	Let $K$ be an imaginary quadratic field. Write $D$ for the discriminant of $K$. Let $f \in \mathbb{Z}_{>0}$. Define the ring class field of $K$ of conductor $f$, which is denoted by $K[f]$, to be equal to $K(x)$, where $x$ is any singular modulus of discriminant $\Delta = f^2 D$. This definition is independent of the choice of $x$, by e.g.~\cite[Th.~11.1]{Cox89}. The ring class field $K[f]$ is an abelian extension of $K$ and a Galois extension of $\mathbb{Q}$. The Galois group $\gal(K[f]/K)$ is isomorphic to the class group of discriminant $\Delta$, denoted $\mathrm{cl}(\Delta)$. See \cite[p.~180]{Cox89}.
	
	\begin{lemma}[{\cite[Prop.~3.1]{AllombertBiluMadariaga15}}]\label{lem:compring}
		Let $K$ be an imaginary quadratic field. Write $D$ for the discriminant of $K$ and let $f, g \in \mathbb{Z}_{>0}$. Then
		\[ K[f] K[g] \subseteq K\left[\lcm\left(f, g\right)\right].\]
	\end{lemma}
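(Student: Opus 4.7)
The plan is to reduce to the divisibility case. Set $m = \lcm(f, g)$, so that $f \mid m$ and $g \mid m$. If I can establish the implication
\[ h \mid h' \implies K[h] \subseteq K[h'], \]
then applying it with $(h, h') = (f, m)$ and $(h, h') = (g, m)$ gives both $K[f] \subseteq K[m]$ and $K[g] \subseteq K[m]$, and the compositum inclusion $K[f] K[g] \subseteq K[m]$ follows immediately.

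The real content is therefore this divisibility step. The natural tool is the class field theoretic description of ring class fields recalled in Section~3.1: writing $\mathcal{O}_h = \mathbb{Z} + h \mathcal{O}_K$ for the order of conductor $h$, the Artin map gives an isomorphism $\gal(K[h]/K) \cong \mathrm{Pic}(\mathcal{O}_h) \cong \mathrm{cl}(h^2 D)$. When $h \mid h'$, one has the containment $\mathcal{O}_{h'} \subseteq \mathcal{O}_h$ (smaller order, larger conductor), and I would check that extension of ideals $\mathfrak{a} \mapsto \mathfrak{a} \mathcal{O}_h$ induces a well-defined surjective group homomorphism $\mathrm{Pic}(\mathcal{O}_{h'}) \twoheadrightarrow \mathrm{Pic}(\mathcal{O}_h)$. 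Via the Galois correspondence, this surjection of Galois groups translates precisely into the sought inclusion of fields $K[h] \subseteq K[h']$.

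The main obstacle is justifying the surjection of Picard groups cleanly, since in non-maximal orders one must restrict attention to proper (equivalently, invertible) ideals, and one must verify that every class in $\mathrm{Pic}(\mathcal{O}_h)$ is hit. The standard way around this is to work with the subgroup of ideals prime to $h'$ (on both sides), where the correspondence with ideals of $\mathcal{O}_K$ becomes bijective and the extension map is visibly surjective; this is exactly the approach used in Cox's treatment (Chapter~7), which underlies the isomorphism $\gal(K[h]/K) \cong \mathrm{Pic}(\mathcal{O}_h)$ already invoked above. Once this surjectivity is in hand, the remainder of the argument is formal.
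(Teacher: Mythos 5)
The paper does not prove this lemma at all --- it is quoted verbatim as Proposition~3.1 of Allombert--Bilu--Pizarro-Madariaga, so there is no in-paper argument to compare against. Your proposal reconstructs the standard proof, and the reduction to the divisibility statement $h \mid h' \Rightarrow K[h] \subseteq K[h']$ together with the order-theoretic argument is exactly how that statement is established in the literature (Cox, Ch.~7 and \S9). One point to phrase more carefully: the bare existence of a surjection $\mathrm{Pic}(\mathcal{O}_{h'}) \twoheadrightarrow \mathrm{Pic}(\mathcal{O}_h)$ of abstract groups does not by itself yield an inclusion of fields; what you actually need is that both Picard groups are quotients of the common group $I_K(h')$ of $\mathcal{O}_K$-ideals prime to $h'$, with the congruence subgroup defining $K[h']$ contained in the one defining $K[h]$ (equivalently, that your surjection commutes with the Artin maps on primes), after which the classification of abelian extensions by congruence subgroups gives $K[h] \subseteq K[h']$. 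Your parenthetical about working with ideals prime to $h'$ on both sides shows you have this picture in mind, so the gap is one of wording rather than substance. Note also that the cited reference proves the stronger statement that the inclusion is an equality outside a short list of exceptional cases with $D \in \{-3,-4\}$; only the easy inclusion you prove is used here.
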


	\subsection{\texorpdfstring{$2$}{2}-elementary groups, fields, and discriminants}
	
	We now recall some terminology from \cite{BiluGunTron22}.

A group $G$ is called $2$-elementary if every element of $G$ has order $\leq 2$. Note that a $2$-elementary group is always abelian and a finite $2$-elementary group is isomorphic to $(\mathbb{Z} / 2\mathbb{Z})^n$ for some $n \in \mathbb{Z}_{>0}$. 

A finite field extension $L/K$ is called $2$-elementary if the extension $L/K$ is Galois and the Galois group $\gal(L/K)$ is $2$-elementary. A number field $L$ is called $2$-elementary if the extension $L/ \mathbb{Q}$ is $2$-elementary.

A discriminant $\Delta=f^2 D$ is called $2$-elementary if the finite field extension $K[f]/K$ is $2$-elementary, where $K = \mathbb{Q}(\sqrt{D})$. The following lemma is immediate, since quotients of $2$-elementary groups are themselves $2$-elementary.

\begin{lemma}\label{lem:2elemsub}
	Let $\Delta = f^2 D$ be a $2$-elementary discriminant and set $K = \mathbb{Q}(\sqrt{D})$. If $g \in \mathbb{Z}_{>0}$ is such that $K[g] \subseteq K[f]$, then the discriminant $g^2 D$ is $2$-elementary. In particular, the discriminant $D$ is $2$-elementary (since $K[1] \subseteq K[f]$ by Lemma~\ref{lem:compring}).
\end{lemma}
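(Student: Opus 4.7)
The plan is to argue entirely by Galois theory, using the identification $\gal(K[f]/K) \cong \mathrm{cl}(\Delta)$ recalled earlier in the excerpt. By definition, the hypothesis that $\Delta = f^2 D$ is $2$-elementary means precisely that $\gal(K[f]/K)$ is a $2$-elementary group; equivalently, $K[f]/K$ is an abelian extension in which every nontrivial element of the Galois group has order $2$. What we need to extract is that for every intermediate field $K[g]$, the Galois group $\gal(K[g]/K)$ inherits this property.

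The first step is to observe that, since $K[f]/K$ is abelian, every subextension $K \subseteq K[g] \subseteq K[f]$ is itself Galois over $K$. The Galois correspondence then produces a surjection
\[ \gal(K[f]/K) \twoheadrightarrow \gal(K[g]/K), \]
whose kernel is $\gal(K[f]/K[g])$. The second step is the purely group-theoretic remark that any quotient of a $2$-elementary group is again $2$-elementary: if $x \in G/H$ lifts to $\tilde{x} \in G$ with $\tilde{x}^2 = e$, then $x^2 = e$. Combining these, $\gal(K[g]/K)$ is $2$-elementary, which is exactly what it means for $g^2 D$ to be a $2$-elementary discriminant.

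For the parenthetical "in particular" assertion, I would apply the main statement with $g = 1$. The ring class field $K[1]$ (the Hilbert class field of $K$) satisfies $K[1] \subseteq K[f]$: indeed, Lemma~\ref{lem:compring} gives $K[1] \cdot K[f] \subseteq K[\lcm(1, f)] = K[f]$, so in particular $K[1] \subseteq K[f]$. Applying the first part with $g = 1$ yields that $D = 1^2 \cdot D$ is $2$-elementary.

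There is no real obstacle; this is a formal consequence of the Galois correspondence together with the description of ring class fields. The paper's remark that the lemma is "immediate" is accurate, and my proposed proof consists of essentially these two sentences of Galois theory together with the application of Lemma~\ref{lem:compring} at $g = 1$.
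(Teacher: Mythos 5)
Your proof is correct and follows exactly the paper's reasoning: the paper justifies the lemma in one line by noting that quotients of $2$-elementary groups are $2$-elementary, which is precisely the Galois-correspondence argument you spell out, and the "in particular" clause is handled the same way via Lemma~\ref{lem:compring} with $g = 1$.
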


\begin{proposition}[{\cite[Cor.~3.3]{AllombertBiluMadariaga15}}]\label{prop:2elemequiv}
Let $x$ be a singular modulus of discriminant $\Delta = f^2 D$ and $K = \mathbb{Q}(\sqrt{D})$. The following are equivalent:
	\begin{enumerate}
		\item The discriminant $\Delta$ is $2$-elementary.
		\item The extension $K[f]/\mathbb{Q}$ is abelian.
		\item The number field $K[f]$ is $2$-elementary.
		\item The extension $\mathbb{Q}(x) / \mathbb{Q}$ is Galois.
		\item The extension $\mathbb{Q}(x) / \mathbb{Q}$ is abelian.
	\end{enumerate}
\end{proposition}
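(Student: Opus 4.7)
The plan is to reduce everything to the structure of $\gal(K[f]/\mathbb{Q})$ as a generalized dihedral group, from which each equivalence is a short group-theoretic check. Recall that $K[f]/\mathbb{Q}$ is Galois and fits into the exact sequence
\[ 1 \to \gal(K[f]/K) \to \gal(K[f]/\mathbb{Q}) \to \gal(K/\mathbb{Q}) \to 1, \]
with $\gal(K[f]/K) \cong \mathrm{cl}(\Delta)$. Complex conjugation restricted to $K[f]$ is an element of order $2$ mapping to the nontrivial element of $\gal(K/\mathbb{Q})$, so the sequence splits. Combined with the classical fact that complex conjugation acts on $\mathrm{cl}(\Delta)$ by inversion, this gives the identification
\[ \gal(K[f]/\mathbb{Q}) \cong \mathrm{cl}(\Delta) \rtimes \mathbb{Z}/2\mathbb{Z}, \]
the generalized dihedral group.

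The equivalence $(1) \Leftrightarrow (2) \Leftrightarrow (3)$ is then immediate: the semi-direct product is abelian iff inversion acts trivially, iff $\mathrm{cl}(\Delta)$ is $2$-elementary; and when this holds the product becomes a direct product of two $2$-elementary groups, hence $2$-elementary itself. The reverse implications $(3)\Rightarrow(2)\Rightarrow(1)$ are formal.

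For the remaining conditions, I would first use that the singular moduli of discriminant $\Delta$ form a single Galois orbit to get $[\mathbb{Q}(x):\mathbb{Q}] = h(\Delta)$, while $K(x) = K[f]$ has degree $2h(\Delta)$ over $\mathbb{Q}$. A degree count with the formula $[\mathbb{Q}(x)\cdot K:\mathbb{Q}] = [\mathbb{Q}(x):\mathbb{Q}][K:\mathbb{Q}]/[\mathbb{Q}(x)\cap K:\mathbb{Q}]$ (valid since $K/\mathbb{Q}$ is Galois) forces $\mathbb{Q}(x) \cap K = \mathbb{Q}$, so the order-$2$ subgroup $H := \gal(K[f]/\mathbb{Q}(x))$ does \emph{not} lie inside $\gal(K[f]/K) \cong \mathrm{cl}(\Delta)$. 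Writing $H = \langle n c \rangle$ with $n \in \mathrm{cl}(\Delta)$ and $c$ the reflection, one checks that for $m \in \mathrm{cl}(\Delta)$,
\[ m(nc)m^{-1} = m^2 \cdot nc, \]
so $H$ is normal in $\gal(K[f]/\mathbb{Q})$ iff $m^2 = 1$ for every $m \in \mathrm{cl}(\Delta)$, i.e.\ iff $\mathrm{cl}(\Delta)$ is $2$-elementary. This gives $(4) \Leftrightarrow (1)$. Finally, $(5) \Rightarrow (4)$ is trivial, and under $(1)$ the whole group $\gal(K[f]/\mathbb{Q})$ is abelian, so $\gal(\mathbb{Q}(x)/\mathbb{Q}) \cong \gal(K[f]/\mathbb{Q})/H$ is abelian, giving $(1) \Rightarrow (5)$. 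The degenerate cases $h(\Delta) = 1$ make $\mathbb{Q}(x) = \mathbb{Q}$ and the whole statement trivial, so they cause no difficulty.

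The only nontrivial ingredient is the classical identification $\gal(K[f]/\mathbb{Q}) \cong \mathrm{cl}(\Delta) \rtimes \mathbb{Z}/2\mathbb{Z}$, relying on the action of complex conjugation as inversion on the class group; everything else is routine group theory and an elementary degree count. Accordingly, I would expect the proof to cite this structural fact and then spend only a few lines on each implication.
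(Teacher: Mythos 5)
Your argument is correct. The paper offers no proof of this proposition at all: it is imported wholesale by citation to \cite[Cor.~3.3]{AllombertBiluMadariaga15}, and your derivation via the generalized dihedral structure $\gal(K[f]/\mathbb{Q}) \cong \mathrm{cl}(\Delta) \rtimes \mathbb{Z}/2\mathbb{Z}$ (with the reflection acting by inversion) is essentially the standard argument underlying that cited result, so there is nothing in the paper to diverge from. The one point worth tidying is the normality check for $H = \langle nc\rangle$: you only compute conjugation by elements $m \in \mathrm{cl}(\Delta)$, which suffices for the implication $(4)\Rightarrow(1)$, while the converse $(1)\Rightarrow(4)$ is anyway immediate because under $(1)$ the whole group is abelian — so no actual gap.
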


Tatuzawa \cite[Th.~2]{Tatuzawa51} proved that Siegel's \cite[(1)]{Siegel35} classical, and classically ineffective, lower bound for the class number of imaginary quadratic fields can be made effective, apart from possibly a single exceptional imaginary quadratic field. The following result is an explicit consequence of his theorem.

\begin{proposition}\label{prop:Tat}
There exists a fundamental discriminant $D_*$ with the following properties:
\begin{enumerate}
    \item $h(D_*) \geq 128$,
    \item if $\Delta = f^2 D$ is a $2$-elementary discriminant, then either $D = D_*$ or $h(\Delta) \leq 16$.
\end{enumerate}
In particular, $\lvert D_* \rvert > 8000$. If $\Delta$ is a $2$-elementary discriminant with $h(\Delta) \leq 16$, then $\lvert \Delta \rvert \leq 7392$.
\end{proposition}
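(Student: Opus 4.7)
The strategy is to combine Tatuzawa's effective form of Siegel's theorem with a PARI/GP enumeration of finitely many remaining cases.

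First, applying Tatuzawa's theorem with a suitably small $\epsilon$ yields an explicit threshold $N_0$ and at most one exceptional fundamental discriminant $D_{\mathrm{T}}$ such that every fundamental $D \neq D_{\mathrm{T}}$ with $|D| \ge N_0$ satisfies $h(D) \ge 128$; this follows by translating Tatuzawa's lower bound on $L(1, \chi_D)$ via the analytic class number formula $h(D) = w \sqrt{|D|}\, L(1, \chi_D) / (2\pi)$.

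Next, I constrain $2$-elementary $\Delta = f^2 D$. By Lemma~\ref{lem:2elemsub}, the fundamental discriminant $D$ is $2$-elementary, hence $h(D)$ is a power of $2$ by genus theory. The class number formula for non-maximal orders,
\[h(\Delta) = \frac{f \, h(D)}{[\mathcal{O}_K^* : \mathcal{O}^*]} \prod_{p \mid f}\left(1 - \left(\tfrac{D}{p}\right) \tfrac{1}{p}\right),\]
then implies that $h(\Delta) \le 16$ forces both $h(D)$ bounded and $f$ to be a product of a bounded number of small primes. Combined with Tatuzawa, either $D = D_{\mathrm{T}}$ or $|D| \le N_0$, so $(D, f)$ lies in an explicit finite set. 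A PARI/GP enumeration over this set should then verify that $h(\Delta) \le 16$ holds precisely when $|\Delta| \le 7392$, establishing the last sentence of the proposition.

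To produce $D_*$: if Tatuzawa's exception $D_{\mathrm{T}}$ is $2$-elementary with $h(D_{\mathrm{T}}) \ge 128$, take $D_* := D_{\mathrm{T}}$, so that property~(2) follows from the previous step. Otherwise, the enumeration (together with unconditional tables of small-class-number imaginary quadratic fields) excludes $D_{\mathrm{T}}$ from contributing any $2$-elementary $\Delta$ with $h(\Delta) > 16$ beyond those already enumerated, and one may take $D_*$ to be any explicitly verified $2$-elementary fundamental discriminant with $h(D_*) \ge 128$; the bound $|D_*| > 8000$ then follows from the constraint on the number of distinct prime divisors of $D_*$ imposed by $h(D_*) \ge 128$. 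The main obstacle is the ineffective nature of Tatuzawa's exceptional discriminant, which requires the argument to proceed by a case-split validating both sub-cases in parallel, even though one cannot constructively determine which sub-case actually occurs.
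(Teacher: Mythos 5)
Your overall strategy (Tatuzawa plus a finite enumeration) is the one underlying the result the paper actually cites --- the paper itself disposes of properties (1) and (2) by quoting \cite[Prop.~2.11]{BiluGunTron22} and only performs two PARI computations on top --- but your execution has two genuine gaps. The more serious one is the fallback construction of $D_*$: when Tatuzawa's exceptional discriminant does not itself qualify, you propose to ``take $D_*$ to be any explicitly verified $2$-elementary fundamental discriminant with $h(D_*)\geq 128$''. No such discriminant is known to exist: a $2$-elementary fundamental discriminant has class group equal to its genus group, so $h(D)=2^{\omega(D)-1}$, the largest known example has $h(D)=16$, and it is precisely the point of this circle of ideas that at most one further example can occur. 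The repair is to observe that in this case property (2) is vacuously true, so $D_*$ need not be $2$-elementary at all; the paper takes $D_*$ to be the least fundamental discriminant with $h(D_*)\geq 128$, namely $-8399$. Relatedly, your derivation of $\lvert D_*\rvert>8000$ ``from the constraint on the number of distinct prime divisors of $D_*$'' presupposes that $D_*$ is $2$-elementary, which it need not be; the only available argument is the direct computation that every fundamental discriminant with $\lvert D\rvert\leq 8000$ has $h(D)\leq 120<128$.

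The second gap concerns the final sentence. You write ``either $D=D_{\mathrm{T}}$ or $\lvert D\rvert\leq N_0$, so $(D,f)$ lies in an explicit finite set'', but the first alternative does not produce an explicit set: Tatuzawa gives neither an upper bound on $\lvert D_{\mathrm{T}}\rvert$ nor a lower bound on $h(D_{\mathrm{T}})$, so a priori there could be a $2$-elementary $\Delta=f^2D_{\mathrm{T}}$ with $h(\Delta)\leq 16$ and $\lvert\Delta\rvert$ enormous, out of reach of any enumeration. Closing this requires an unconditional explicit bound independent of Tatuzawa: the paper invokes \cite[Prop.~7.3]{Fowler24} (resting ultimately on Watkins-type classifications of small class numbers) to get that $h(\Delta)\leq 32$ forces $\lvert\Delta\rvert\leq 166147$, and only then enumerates down to $7392$. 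The same unconditional input is what rules out an exceptional $D_{\mathrm{T}}$ with few prime factors and hence justifies $h(D_*)\geq 128$ in the non-vacuous case. Your parenthetical mention of ``unconditional tables of small-class-number imaginary quadratic fields'' is the right missing ingredient, but it is attached to the wrong step and the case analysis as written does not go through without it.
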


\begin{proof}
    The first two properties are \cite[Prop.~2.11]{BiluGunTron22}. The rest is a simple calculation in PARI. One may verify that if $D$ is a fundamental discriminant such that $\lvert D \rvert \leq 8000$, then $h(D) \leq 120$. Similarly, one checks that every $2$-elementary discriminant $\Delta$ with $h(\Delta) \leq 16$ satisfies $\lvert \Delta \rvert \leq 7392$. For the latter computation, we use the fact \cite[Prop.~7.3]{Fowler24} that $h(\Delta) \leq 32$ implies that $\lvert \Delta \rvert \leq 166147$.
\end{proof}

Note that there may not exist any $2$-elementary discriminants $\Delta$ with $h(\Delta) > 16$. In this case, any fundamental discriminant $D_*$ with $h(D_*) \geq 128$ satisfies Proposition~\ref{prop:Tat}; for definiteness of notation, we set $D_*$ to be the least (in absolute value) such fundamental discriminant, so $D_* = -8399$ . If there do exist $2$-elementary discriminants $\Delta$ with $h(\Delta) > 16$, then $D_*$ is uniquely determined.

We now show that the conductors of $2$-elementary discriminants are bounded. This is a strengthening of a special case of \cite[Propositions~2.9 \& 2.10]{BiluGunTron22}, which together imply that any such conductor $f$ satisfies $f \leq 28560$.

\begin{lemma}\label{lem:2elemcond}
	Let $\Delta = f^2 D$ be a $2$-elementary discriminant. Then $f \leq 60$ and $f \mid 2^3 \cdot 3 \cdot 5 \cdot 7$. If $D \neq D_*$, then $f \leq 8$.
\end{lemma}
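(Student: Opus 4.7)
The plan is to split the proof according to whether $D$ equals the exceptional discriminant $D_*$ or not. In the case $D \neq D_*$, Proposition~\ref{prop:Tat} immediately gives $h(\Delta) \leq 16$, so $\lvert \Delta \rvert = f^2 \lvert D \rvert \leq 7392$ and, using $\lvert D \rvert \geq 3$, the crude bound $f \leq 49$. One then verifies directly in PARI that, as $D$ ranges over the finitely many fundamental $2$-elementary discriminants with $\lvert D \rvert \leq 7392$ and $f$ over positive integers with $f^2 \lvert D \rvert \leq 7392$, the pairs $(D, f)$ for which $f^2 D$ is $2$-elementary all satisfy $f \leq 8$ and $f \mid 2^3 \cdot 3 \cdot 5 \cdot 7$. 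This handles the stronger conclusion of the lemma in this case.

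In the case $D = D_*$, we have $\lvert D_* \rvert > 8000$, so $K = \mathbb{Q}(\sqrt{D_*})$ satisfies $\mathcal{O}_K^\times = \{\pm 1\}$. The classical exact sequence relating the class group of the order of conductor $f$ to that of $\mathcal{O}_K$ (see \cite[Theorem~7.24]{Cox89}) then identifies the kernel of $\mathrm{cl}(f^2 D_*) \to \mathrm{cl}(D_*)$ with $(\mathcal{O}_K/f)^\times/(\mathbb{Z}/f\mathbb{Z})^\times$; since this kernel sits inside the $2$-elementary group $\mathrm{cl}(f^2 D_*)$, it must itself be $2$-elementary. By the Chinese remainder theorem, each local factor $(\mathcal{O}_K/p^{v_p(f)})^\times/(\mathbb{Z}/p^{v_p(f)}\mathbb{Z})^\times$ must separately be $2$-elementary. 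For odd $p$, this factor is cyclic of order $p^{v_p(f)-1}$ times $p-1$, $p+1$, or $p$, according as $p$ splits, is inert, or ramifies in $K$, and $2$-elementariness forces $v_p(f) = 1$ with $p = 3$ split in $K$. A more delicate analysis at $p = 2$, distinguishing cases by $D_* \bmod 8$, should yield $v_2(f) \leq 3$, giving $f \mid 24$, well inside the stated bounds.

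The expected main obstacle is the $p = 2$ analysis when $D = D_*$: the structure of $(\mathcal{O}_K/2^k)^\times$ in residue characteristic $2$ is substantially more intricate than in odd residue characteristic, as one must carefully track when an element of order $4$ first appears in the successive quotients $(1 + \mathfrak{p}^j)/(1 + \mathfrak{p}^{j+1})$. The PARI enumeration in the case $D \neq D_*$ is straightforward in principle but voluminous, since it sweeps over several hundred fundamental $2$-elementary discriminants and their admissible conductors.
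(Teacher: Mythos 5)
Your treatment of the case $D \neq D_*$ coincides with the paper's: Proposition~\ref{prop:Tat} gives $\lvert \Delta \rvert \leq 7392$, and a finite PARI sweep yields $f \leq 8$. For $D = D_*$, however, you take a genuinely different route. The paper works purely with the \emph{order} of the class group: since $\Delta$ and $D$ are $2$-elementary, $h(\Delta)/h(D) = \Psi$ is a power of $2$ bounded by $2^{\omega(f)+1}$, and a combinatorial analysis of the class number formula then forces $f \mid 2^3\cdot 3\cdot 5\cdot 7$ and $f \leq 60$. You instead use the \emph{structure} of the kernel of $\mathrm{cl}(f^2D_*) \to \mathrm{cl}(D_*)$, identified via the exact sequence of \cite[Th.~7.24]{Cox89} with $(\mathcal{O}_K/f)^\times/(\mathbb{Z}/f\mathbb{Z})^\times$ (legitimately, since $\lvert D_* \rvert > 8000$ kills the extra units), and demand that each local factor be $2$-elementary. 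This sees more than the paper's argument: a split prime $p=5$ contributes a \emph{cyclic} kernel factor of order $4$, which your criterion excludes but which the order-only argument cannot, so your method (once completed) gives the sharper conclusion $f \mid 24$, which of course implies the stated bounds. Your local analysis at odd $p$ is correct: the factor has order $p^{v_p(f)-1}\bigl(p - (D/p)\bigr)$, is killed only when $p=3$ splits and $v_p(f)=1$, and ramified and inert odd primes are excluded outright.

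The one piece you have not actually carried out is the analysis at $p = 2$, which you defer with ``should yield $v_2(f) \leq 3$.'' This is the step that needs writing: in the split case $D_* \equiv 1 \bmod 8$ the factor is $(\mathbb{Z}/2^k\mathbb{Z})^\times$, which is $2$-elementary exactly for $k \leq 3$; in the inert case the factor has order divisible by $3$, so $2 \nmid f$; in the ramified case $D_* \equiv 0 \bmod 4$ the factor has order $2^k$ and one must exhibit an element of order $4$ for $k$ beyond a small threshold (e.g.\ the class of $1+\sqrt{m}$ already has order $\geq 4$ for $k \geq 2$ when $D_* = 4m$ with $m \equiv 2 \bmod 4$). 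All of these computations do close, so the gap is one of execution rather than of idea, but as written the $p=2$ case is an assertion, not a proof.
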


\begin{proof}
Let $\Delta = f^2 D$ be a $2$-elementary discriminant. If $D \neq D_*$, then $\lvert \Delta \rvert \leq 7392$ by Proposition~\ref{prop:Tat}. We may then verify in PARI that $f \leq 8$ by checking all such $\Delta$. So we may assume subsequently that $D = D_*$. In particular, $D \neq -3, -4$.
	
	The proof now follows the approach in \cite[Prop.~2.9]{BiluGunTron22}. Since $\Delta$ is $2$-elementary, $D$ is $2$-elementary by Lemma~\ref{lem:2elemsub}. Therefore, by \cite[(2.9)]{BiluGunTron22}, we have that
	\[ h(\Delta) = 2^{\rho_2(\Delta)} \mbox{ and } h(D) = 2^{\rho_2(D)},\]
	where $\rho_2(\Delta)$ denotes the dimension of $\mathrm{cl}(\Delta) / \mathrm{cl}(\Delta)^2$ as an $\mathbb{F}_2$-vector space and analogously for $\rho_2(D)$. By \cite[Prop.~2.3]{BiluGunTron22}, $\rho_2(D) = \omega(D) -1$ and $\rho_2(\Delta) \leq \omega(\Delta)$, where $\omega(\cdot)$ denotes the number of distinct prime divisors function. Therefore,
	\[ \rho_2(\Delta) - \rho_2(D) \leq \omega(\Delta) - \omega(D) + 1 \leq \omega(f) + 1.\]
	
	Let
	\[ \Psi = f \prod_{p \mid f} \left(1 - \frac{\left(D/p\right)}{p}\right),\]
	where $(D/p)$ denotes the Kronecker symbol. From the class number formula \cite[(2.4) \& (2.5)]{BiluGunTron22}, we have that
	\[ h(\Delta) = h(D) \Psi.\]
	Hence,
	\[ \Psi = 2^{\rho_2(\Delta) - \rho_2(D)}.\]
	So $\Psi$ is a power of $2$ and $\Psi \mid 2^{\omega(f) + 1}$. This certainly implies that \cite[(2.12)]{BiluGunTron22} holds.  We may thus argue as in the proof of \cite[Prop.~2.9]{BiluGunTron22} to obtain \cite[(2.15)]{BiluGunTron22}, i.e.
	\[ f = 2^k p_1 \cdots p_m\]
	for some non-negative integers $k, m$ and distinct odd primes $p_1 < \ldots < p_m$ not dividing $D$. Note that $k=0$ and $m=0$ are both allowed. Further, for each $p_i$, either $p_i - 1$ or $p_i + 1$ divides $\Psi$ (and hence is a power of $2$).
	
	First, we show that $k \leq 3$. Suppose that $k \geq 1$. Then $\omega(f) = m + 1$. Hence,
	\[ 2^{m+2} = 2^{\omega(f)+1} \geq \Psi \geq 2^{k-1} (p_1 - 1) \cdots (p_m - 1) \geq 2^{m + k - 1}.\]
	Therefore, $m + k - 1 \leq m + 2$, and so $k \leq 3$. 
	
	Suppose that $\omega(f) = 1$. So $\Psi \mid 4$ and either $f = 2^k$ for some $k \leq 3$ or $f = p_1$ for some odd prime $p_1$ such that either $p_1 - 1$ or $p_1 + 1$ divides $\Psi$. Thus, if $f = p_1$, then $p_1 \in \{3, 5\}$. We are thus done in this case.
	
	So we may assume that $\omega(f) \geq 2$. So $m \geq 1$. Suppose that $p_m > 7$. One of $p_m - 1$ and $p_m + 1$ is a power of $2$, so $p_m \geq 17$. Therefore,
	\[ 2^{m+2} \geq 2^{\omega(f) + 1} \geq \Psi \geq (p_1 - 1) \cdots (p_m - 1).\]
	If $m \geq 2$, then
	\[ 2^{m+2} \geq (3 - 1) (5 - 1)^{m-2} 16 = 2^{1 + 2m}\]
	and so $1 + 2m \leq m + 2$, which is false since $m \geq 2$. So we must have that $m = 1$, and hence $\omega(f) = 2$. Hence, $f = 2^k p_m$ for some $k \geq 1$. We thus have that
	\[ 8 = 2^{\omega(f) + 1} \geq \Psi \geq 2^{k-1} (p_m - 1) \geq 16,\]
	which is absurd. So $p_m \leq 7$. 
	
	So $p_1, \ldots, p_m \in \{3, 5, 7\}$. In particular, $m \leq 3$ and so $\omega(f) \leq 4$. Suppose that $\omega(f) = 4$. Then $f = 2^k \cdot 3 \cdot 5 \cdot 7$ for some $k \geq 1$. Thus
	\[ 2^5 = 2^{\omega(f)+1} \geq \Psi \geq 2^{k-1} (3-1)(5-1)(7-1) \geq 48,\]
	which is false. So $\omega(f) \leq 3$.
	
	Suppose that $\omega(f) = 3$. So $\Psi \leq 2^4$. If $k = 0$, then $f = 3 \cdot 5 \cdot 7$ and hence
	\[ 2^4 \geq \Psi \geq (3-1)(5-1)(7-1) = 48,\]
	which is absurd. So $k \geq 1$ and hence $m = 2$. So $f = 2^k p_1 p_2$ with $p_1, p_2 \in \{3, 5, 7\}$. If $p_1 = 5$ and $p_2 = 7$, then 
	\[ 2^4 \geq \Psi \geq 2^{k-1} (5-1) (7-1) \geq 24,\]
	which is absurd. If $p_1 = 3$ and $p_2 = 7$, then
	\[2^4 \geq \Psi \geq 2^{k-1} (3-1)(7-1) = 12 \cdot 2^{k-1},\]
	and so $k=1$ and $f = 2 \cdot 3 \cdot 7 = 42$. If $p_1 = 3$ and $p_2 = 5$, then
	\[2^4 \geq \Psi \geq 2^{k-1} (3-1)(5-1) = 8 \cdot 2^{k-1},\]
	and so $k \leq 2$ and $f \mid 2^2 \cdot 3 \cdot 5 = 60$.
	
	If $\omega(f) = 2$, then $f \leq 2^3 \cdot 7 = 56$. This completes the proof.
\end{proof}

\subsection{The transfer field}

	Let $K$ be an imaginary quadratic field. The transfer field of $K$, denoted by $K^\mathrm{tf}$, is defined as
\[ K^{\mathrm{tf}} = \bigcup_{f \in \mathbb{Z}_{>0}} K[f].\]
The transfer field $K^\mathrm{tf}$ is an abelian extension of $K$ and a Galois extension of $\mathbb{Q}$.

The following result is due to Cohn \cite{Cohn94}; a generalisation was proved by K\"uhne \cite{Kuhne21}. Cohn's result was previously applied by Binyamini \cite[\S2.3]{Binyamini19} with $K_2 = \mathbb{Q}(\sqrt{D_*})$ to obtain some effective Andr\'e--Oort results; we will require the full generality of Cohn's result for any two distinct imaginary quadratic fields.

\begin{proposition}[{\cite[Th. 8.3.12]{Cohn94}}, cf. {\cite[Cor.~1.2]{Kuhne21}}]\label{prop:Cohn}
	Let $K_1 \neq K_2$ be distinct, imaginary quadratic fields. Then the group
	\[ \gal(K_1^\mathrm{tf} \cap K_1 K_2^\mathrm{tf} / K_1) \]
is $2$-elementary.
\end{proposition}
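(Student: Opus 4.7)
The plan is to prove the slightly stronger statement that the maximal abelian quotient of $\gal(K_1 K_2^{\mathrm{tf}}/K_1)$ is already $2$-elementary. Writing $M := K_1^{\mathrm{tf}} \cap K_1 K_2^{\mathrm{tf}}$, the extension $M/K_1$ is Galois (as an intersection of Galois extensions of $K_1$) and abelian (being contained in $K_1^{\mathrm{tf}}/K_1$). Hence $\gal(M/K_1)$ is automatically a quotient of $\gal(K_1 K_2^{\mathrm{tf}}/K_1)^{\mathrm{ab}}$, and $2$-elementarity of the latter transfers to the former.

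Set $L := K_1 K_2^{\mathrm{tf}}$, $A := \gal(K_2^{\mathrm{tf}}/K_2)$, and let $\tau$ denote the restriction of complex conjugation to $K_2^{\mathrm{tf}}$. The well-known fact from CM theory that complex conjugation acts on $\gal(K_2^{\mathrm{tf}}/K_2)$ by inversion (via the Artin map and the identity $[\overline{\mathfrak{a}}] = [\mathfrak{a}]^{-1}$ in ring class groups) gives $\tau a \tau^{-1} = a^{-1}$ for all $a \in A$, so $\gal(K_2^{\mathrm{tf}}/\mathbb{Q}) = A \rtimes \langle \tau \rangle$ with $\tau$ of order $2$ acting by inversion. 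For any group $G = H \rtimes \langle t \rangle$ of this form, a direct commutator computation shows $[G,G] = \{h^2 : h \in H\}$, so $G^{\mathrm{ab}} = (H/H^2) \times \langle t \rangle$, which is $2$-elementary. It therefore suffices to exhibit $\gal(L/K_1)$ as a semidirect product of this shape.

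Since $K_1$ is quadratic over $\mathbb{Q}$, either $K_1 \cap K_2^{\mathrm{tf}} = \mathbb{Q}$ or $K_1 \subseteq K_2^{\mathrm{tf}}$. In the first case, a cardinality count shows that the natural restriction map $\gal(L/K_1) \to \gal(K_2^{\mathrm{tf}}/\mathbb{Q})$ is an isomorphism, and the conclusion follows at once. In the second case, $L = K_2^{\mathrm{tf}}$; setting $F := K_1 K_2$ and $B := \gal(K_2^{\mathrm{tf}}/F) \subseteq A$, I would check that any $\phi \in \gal(K_2^{\mathrm{tf}}/K_1) \setminus B$ must act nontrivially on $K_2$ (else it would fix the whole of $F$), so $\phi = a \tau$ for some $a \in A$. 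The computation $(a\tau)^2 = a \cdot (\tau a \tau^{-1}) \cdot \tau^2 = a \cdot a^{-1} = 1$ shows that $\phi$ has order $2$, and conjugation by $\phi$ inverts $B$. Picking any such $\phi$ then yields $\gal(K_2^{\mathrm{tf}}/K_1) = B \rtimes \langle \phi \rangle$, again of the required shape.

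I expect the main technical point to lie in the second case, where the coset analysis of $\gal(K_2^{\mathrm{tf}}/K_1)$ modulo $B$ and the verification that a representative outside $B$ has order $2$ require some careful bookkeeping with the semidirect product structure. The commutator identification and the inversion property of complex conjugation in CM theory are both routine, and once the semidirect product decomposition is in hand the conclusion drops out uniformly in both cases.
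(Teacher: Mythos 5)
Your argument is correct. Note, though, that the paper does not prove this proposition at all: it is imported as a black box, cited to Cohn and to K\"uhne. What you have written is essentially the standard proof behind those references, namely that $\gal(K_2^{\mathrm{tf}}/\mathbb{Q})$ is generalized dihedral over $A=\gal(K_2^{\mathrm{tf}}/K_2)$ because complex conjugation acts on ring class groups by inversion, and that a generalized dihedral group $G=H\rtimes\langle t\rangle$ satisfies $g^2\in[G,G]$ for every $g\in G$, so every abelian quotient of $G$ is $2$-elementary. Your reduction to the abelianization is clean and, usefully, sidesteps any worry about topological closures in the infinite Galois group, precisely because $g^2\in[G,G]$ holds elementwise. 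Two small points of presentation: in the first case, the ``cardinality count'' is better replaced by the base-change isomorphism $\gal(K_1K_2^{\mathrm{tf}}/K_1)\cong\gal(K_2^{\mathrm{tf}}/K_1\cap K_2^{\mathrm{tf}})=\gal(K_2^{\mathrm{tf}}/\mathbb{Q})$, which is exactly Proposition~\ref{prop:Lang} (cardinality arguments are the wrong language for infinite extensions); and in the second case you should record explicitly that $[\gal(K_2^{\mathrm{tf}}/K_1):B]=[K_1K_2:K_1]=2$, so that $\gal(K_2^{\mathrm{tf}}/K_1)=B\cup B\phi$ and the semidirect product decomposition $B\rtimes\langle\phi\rangle$ really exhausts the group. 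With those touches, your write-up is a complete, self-contained proof of a statement the paper only cites.
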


From Cohn's result, we may deduce the following result, which will play a key part in the proof of Theorem~\ref{thm:main}.

\begin{proposition}\label{prop:auto}
    Let $\Delta = f^2 D_1$ be a discriminant, where $D_1$ is the corresponding fundamental discriminant. Suppose that $\Delta$ is not $2$-elementary. Let $D_2$ be a fundamental discriminant such that $D_2 \neq D_1$. Then there exists $\sigma \in \gal(\overline{\mathbb{Q}} / \mathbb{Q})$ such that:
    \begin{enumerate}
        \item if $x$ is a singular modulus of discriminant $\Delta$, then $\sigma(x) \neq x$.
        \item if $y$ is a singular modulus with fundamental discriminant $D_2$, then $\sigma(y) = y$.
    \end{enumerate}
\end{proposition}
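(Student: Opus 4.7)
The plan is to construct $\sigma$ by first specifying its restriction to the Galois closure $L_1 = K_1[f]$ (where $K_1 = \mathbb{Q}(\sqrt{D_1})$), which contains all singular moduli of discriminant $\Delta$, and then extending it while forcing it to act trivially on $K_2^{\mathrm{tf}}$. The key observation is that for any singular modulus $x$ of discriminant $\Delta$ one has $K_1(x) = L_1$, so the action of $\mathrm{cl}(\Delta) = \gal(L_1/K_1)$ on the set of singular moduli of discriminant $\Delta$ is simply transitive. Hence condition~(1) is automatically satisfied by \emph{any} nontrivial element of $\mathrm{cl}(\Delta)$, and the problem becomes a purely Galois-theoretic one: find such a nontrivial element that also fixes $M := L_1 \cap K_2^{\mathrm{tf}}$.

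Assuming this is done, one extends $\tilde\sigma \in \gal(L_1/M)$ to $\sigma \in \gal(\overline{\mathbb{Q}}/\mathbb{Q})$ by specifying it to be the identity on $K_2^{\mathrm{tf}}$ and then extending arbitrarily: this is well-posed because $L_1$ and $K_2^{\mathrm{tf}}$ are both Galois over $\mathbb{Q}$ with intersection exactly $M$, so $\gal(L_1 K_2^{\mathrm{tf}}/M) \cong \gal(L_1/M) \times \gal(K_2^{\mathrm{tf}}/M)$. Condition~(2) is then immediate, since any singular modulus $y$ with fundamental discriminant $D_2$ lies in $K_2[g] \subseteq K_2^{\mathrm{tf}}$ for its conductor $g$.

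The heart of the argument is producing $\tilde\sigma$, and this is exactly where Cohn's theorem (Proposition~\ref{prop:Cohn}) is needed. One checks that $K_1 M \subseteq L_1 \subseteq K_1^{\mathrm{tf}}$ and $K_1 M \subseteq K_1 K_2^{\mathrm{tf}}$, so $K_1 M \subseteq K_1^{\mathrm{tf}} \cap K_1 K_2^{\mathrm{tf}}$. Cohn then gives that $\gal(K_1 M/K_1)$, being a quotient of a $2$-elementary group, is itself $2$-elementary. Setting $H = \gal(L_1/K_1 M) \subseteq \mathrm{cl}(\Delta)$, this says $\mathrm{cl}(\Delta)/H$ is $2$-elementary, hence $H \supseteq \mathrm{cl}(\Delta)^2 = \{a^2 : a \in \mathrm{cl}(\Delta)\}$. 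Since $\Delta$ is \emph{not} $2$-elementary by hypothesis, $\mathrm{cl}(\Delta)^2 \neq \{1\}$, and I may take $\tilde\sigma$ to be any nontrivial element of $\mathrm{cl}(\Delta)^2$; it lies in $H \subseteq \gal(L_1/M)$ and is nontrivial in $\mathrm{cl}(\Delta)$, so both conditions hold.

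The only substantive ingredient is Cohn's theorem; the rest is bookkeeping. Without Cohn, one could not rule out the scenario in which $M$ were so large that $\gal(L_1/M)$ met $\mathrm{cl}(\Delta)$ only in a $2$-elementary subgroup, which might consist entirely of elements that do not survive after taking squares — the hypothesis that $\Delta$ is not $2$-elementary would then fail to deliver a useful element. Cohn's result is precisely what converts ``$\Delta$ not $2$-elementary'' into the existence of a nontrivial square in $\mathrm{cl}(\Delta)$ lying inside $H$.
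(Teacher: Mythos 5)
Your proof is correct and follows essentially the same route as the paper: both hinge on Cohn's theorem (Proposition~\ref{prop:Cohn}) to show that the part of $K_1[f]$ lying inside $K_1K_2^{\mathrm{tf}}$ cuts out only a $2$-elementary quotient of $\mathrm{cl}(\Delta)$, and both then build $\sigma$ as an automorphism of the compositum $K_1[f]\,K_2^{\mathrm{tf}}$ that is nontrivial on $K_1[f]$ and trivial on $K_2^{\mathrm{tf}}$. The only cosmetic difference is that you locate the required element explicitly inside $\mathrm{cl}(\Delta)^2$, whereas the paper argues via Lemmas~\ref{lem:classfield1}--\ref{lem:classfield2} that $\gal(K_1[f]K_2^{\mathrm{tf}}/K_1K_2^{\mathrm{tf}})$ is nontrivial and takes any nonidentity element.
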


Before proving this proposition, we first establish some auxiliary results.

\begin{lemma}\label{lem:classfield1}
	Let $K_1 \neq K_2$ be distinct, imaginary quadratic fields with respective discriminants $D_1, D_2$. Suppose that $f \in \mathbb{Z}_{>0}$ is such that
	\[ K_1[f] \subset K_1 K_2^\mathrm{tf}.\]
	Then the discriminant $f^2 D_1$ is $2$-elementary.
\end{lemma}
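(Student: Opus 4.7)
The plan is to apply Cohn's result (Proposition~\ref{prop:Cohn}) directly, using the observation that $K_1[f]$ lies inside both $K_1^{\mathrm{tf}}$ (by definition of the transfer field) and, by hypothesis, $K_1 K_2^{\mathrm{tf}}$.

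First I would note that, since $K_1^{\mathrm{tf}}$ is the compositum of all ring class fields of $K_1$, we always have $K_1[f] \subseteq K_1^{\mathrm{tf}}$. Combined with the hypothesis $K_1[f] \subseteq K_1 K_2^{\mathrm{tf}}$, this yields
\[ K_1[f] \subseteq K_1^{\mathrm{tf}} \cap K_1 K_2^{\mathrm{tf}}. \]

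Next I would observe that $K_1[f]/K_1$ is Galois (being a ring class field of $K_1$), so by Galois theory $\gal(K_1[f]/K_1)$ is a quotient of $\gal(K_1^{\mathrm{tf}} \cap K_1 K_2^{\mathrm{tf}}/K_1)$. By Proposition~\ref{prop:Cohn}, the latter group is $2$-elementary, and quotients of $2$-elementary groups are $2$-elementary. Hence $\gal(K_1[f]/K_1)$ is $2$-elementary, which by definition means that the discriminant $f^2 D_1$ is $2$-elementary.

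There is really no obstacle here: the whole content is the chain of inclusions together with the passage to a quotient. Cohn's theorem does all the work, and the hypothesis on $K_1[f]$ is exactly what is needed to place it inside the intersection appearing in Proposition~\ref{prop:Cohn}.
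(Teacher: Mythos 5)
Your proposal is correct and follows essentially the same route as the paper: place $K_1[f]$ inside $K_1^{\mathrm{tf}} \cap K_1 K_2^{\mathrm{tf}}$, note that $\gal(K_1[f]/K_1)$ is then a quotient of the group in Proposition~\ref{prop:Cohn}, and conclude since quotients of $2$-elementary groups are $2$-elementary. No issues.
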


\begin{proof}
	By assumption, $K_1[f] \subset K_1 K_2^\mathrm{tf}$. Hence,
	\[ K_1[f] \subseteq K_1^\mathrm{tf} \cap K_1 K_2^\mathrm{tf}.\]
So $\gal(K_1[f]/K_1)$ is isomorphic to a quotient of the group $\gal(K_1^\mathrm{tf} \cap K_1 K_2^\mathrm{tf} / K_1)$. By Proposition~\ref{prop:Cohn}, the group $\gal(K_1^\mathrm{tf} \cap K_1 K_2^\mathrm{tf} / K_1)$ is $2$-elementary. Hence, the group $\gal(K_1[f]/K_1)$ is also $2$-elementary, i.e.~the discriminant $f^2 D_1$ is $2$-elementary. 
\end{proof}

We recall the following fact from Galois theory.

\begin{proposition}[{\cite[Th.~VI.1.12]{Lang02}}]\label{prop:Lang}
	Let $L / k$ be a Galois extension and let $F / k$ be an arbitrary extension. Then $LF / F$ and $L / L \cap F$ are both Galois extensions and the map $\sigma \mapsto \sigma |_L$ is an isomorphism $\gal(LF/ F) \xrightarrow{\sim} \gal(L / L \cap F)$
\end{proposition}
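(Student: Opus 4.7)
The plan is to establish the theorem in three stages: first verify that both $LF/F$ and $L/(L\cap F)$ are Galois extensions, then define the restriction map and show it is a well-defined injective homomorphism $\gal(LF/F) \hookrightarrow \gal(L/(L\cap F))$, and finally deduce surjectivity from a fixed-field computation. The main obstacle will be the surjectivity step, which requires combining the Galois correspondence for $L/(L\cap F)$ with the fundamental fact that $F$ is the fixed field of $\gal(LF/F)$.

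For the two Galois statements, I would use the characterization of a Galois extension as the splitting field of a (possibly infinite) family of separable polynomials. Since $L/k$ is Galois, write $L = k(\{\alpha_i\}_{i\in I})$ where each $\alpha_i$ has separable minimal polynomial $f_i \in k[X]$ that splits completely over $L$. Then $LF = F(\{\alpha_i\}_{i \in I})$ is the splitting field over $F$ of the family $\{f_i\} \subset F[X]$, whose members remain separable, so $LF/F$ is Galois. Since $k \subseteq L \cap F \subseteq L$, the identical argument with the base field $L \cap F$ in place of $F$ shows that $L/(L \cap F)$ is Galois.

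Next, for the restriction map $\varphi \colon \gal(LF/F) \to \gal(L/(L \cap F))$ sending $\sigma \mapsto \sigma|_L$, I would check it is well defined: any $\sigma \in \gal(LF/F)$ fixes $k \subseteq F$, and by normality of $L/k$ it sends $L$ into $L$; moreover, $\sigma$ fixes $L \cap F \subseteq F$ pointwise, so $\sigma|_L \in \gal(L/(L\cap F))$. This is a group homomorphism by construction. Injectivity follows immediately: if $\sigma|_L$ is the identity, then $\sigma$ fixes $L$ and $F$ pointwise, and therefore fixes the compositum $LF$, so $\sigma$ is trivial.

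The central step is surjectivity. Put $H = \varphi(\gal(LF/F)) \leq \gal(L/(L\cap F))$ and let $L^H \subseteq L$ denote its fixed field. I claim $L^H = L \cap F$. Any $a \in L^H$ satisfies $\sigma(a) = a$ for every $\sigma \in \gal(LF/F)$, so $a$ lies in $L$ and simultaneously in the fixed field of $\gal(LF/F)$ acting on $LF$; since $LF/F$ is Galois (by the first step), that fixed field is exactly $F$, giving $a \in L \cap F$. The reverse inclusion $L \cap F \subseteq L^H$ is trivial because every $\sigma \in \gal(LF/F)$ fixes $L \cap F \subseteq F$. Applying the Galois correspondence to the Galois extension $L/(L \cap F)$ then forces $H = \gal(L/L^H) = \gal(L/(L \cap F))$, which is surjectivity and concludes the proof.
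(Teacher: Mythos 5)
The paper offers no proof of this proposition at all --- it is quoted directly from Lang \cite[Th.~VI.1.12]{Lang02} --- so there is no in-paper argument to compare yours against. Your write-up is the standard textbook proof (essentially Lang's own), and it is complete and correct whenever $L/k$ is finite. That covers both invocations in the paper: Lemma~\ref{lem:classfield2} uses it with $L = K_1[f]$ over $k = K_1$ (finite Galois; $F = K_1K_2^{\mathrm{tf}}$ is infinite, but an arbitrary $F$ is allowed and causes no trouble), and Lemma~\ref{lem:almost2elem} uses it with $L = \mathbb{Q}(x_1+\epsilon x_2)$ over $\mathbb{Q}$.

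There is, however, a genuine gap in the surjectivity step for the statement in the generality you yourself set up (you explicitly allow an infinite generating family, hence infinite $L/k$). Showing that the image $H$ has fixed field $L\cap F$ does not force $H = \gal(L/(L\cap F))$ when that group is infinite: the Krull-topology Galois correspondence identifies intermediate fields with \emph{closed} subgroups only, and a proper dense subgroup has the same fixed field as the full group (for instance, the subgroup generated by Frobenius inside $\gal(\overline{\mathbb{F}}_p/\mathbb{F}_p)\cong\widehat{\mathbb{Z}}$ fixes only $\mathbb{F}_p$). The missing ingredient is that $H$ is closed: the restriction map is continuous for the Krull topologies and $\gal(LF/F)$ is compact, so its image is compact, hence closed; density (your fixed-field computation) plus closedness then gives surjectivity. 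With that one additional observation your proof is complete in full generality; as written, it establishes the finite case, which is all this paper actually needs.
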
 

We need one more lemma.

\begin{lemma}\label{lem:classfield2}
	Let $K_1 \neq K_2$ be distinct, imaginary quadratic fields with respective discriminants $D_1, D_2$. Suppose that $f \in \mathbb{Z}_{>0}$ is such that the discriminant $f^2 D_1$ is not $2$-elementary. Then $K_1[f] K_2^\mathrm{tf} / K_1 K_2^\mathrm{tf}$ is a proper Galois extension.
\end{lemma}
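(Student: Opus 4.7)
The plan is to apply the standard Galois-theoretic result Proposition~\ref{prop:Lang} with $L = K_1[f]$, $k = K_1$, and $F = K_1 K_2^{\mathrm{tf}}$. Since $K_1[f]/K_1$ is abelian, in particular Galois, Proposition~\ref{prop:Lang} immediately gives that the compositum $LF = K_1[f] K_2^{\mathrm{tf}}$ is a Galois extension of $F = K_1 K_2^{\mathrm{tf}}$, with the restriction map inducing an isomorphism
\[ \gal(K_1[f] K_2^{\mathrm{tf}} / K_1 K_2^{\mathrm{tf}}) \xrightarrow{\sim} \gal(K_1[f] / K_1[f] \cap K_1 K_2^{\mathrm{tf}}). \]
So the only remaining point is to verify that this extension is proper, i.e.~that $K_1[f] \not\subseteq K_1 K_2^{\mathrm{tf}}$.

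This is exactly where Lemma~\ref{lem:classfield1} does the work. Suppose for contradiction that $K_1[f] \subseteq K_1 K_2^{\mathrm{tf}}$. Then the hypothesis of Lemma~\ref{lem:classfield1} is satisfied, and so the discriminant $f^2 D_1$ must be $2$-elementary. But this contradicts the standing assumption of the lemma that $f^2 D_1$ is \emph{not} $2$-elementary. Hence $K_1[f] \not\subseteq K_1 K_2^{\mathrm{tf}}$, and the extension $K_1[f] K_2^{\mathrm{tf}} / K_1 K_2^{\mathrm{tf}}$ is proper.

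There is no real obstacle here: Lemma~\ref{lem:classfield1} (which in turn encodes Cohn's theorem, Proposition~\ref{prop:Cohn}) is doing all the substantive work, and the rest is a formal application of the standard fact on compositum Galois extensions. The only thing worth being careful about is using the correct version of Proposition~\ref{prop:Lang}: one needs $K_1[f]/K_1$ Galois (not just $K_1[f]/\mathbb{Q}$) so that the base $k = K_1$ sits below both $L$ and $F$, which is exactly the structure provided by the ring class field.
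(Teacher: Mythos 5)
Your proof is correct and follows essentially the same route as the paper: apply Proposition~\ref{prop:Lang} with $L = K_1[f]$, $k = K_1$, $F = K_1 K_2^{\mathrm{tf}}$ to get the Galois property and the isomorphism of Galois groups, then invoke Lemma~\ref{lem:classfield1} in contrapositive form to see that $K_1[f] \not\subseteq K_1 K_2^{\mathrm{tf}}$, so the group is non-trivial and the extension is proper. Nothing is missing.
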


\begin{proof}
	The extension $K_1[f]/K_1$ is Galois. Hence, by Proposition~\ref{prop:Lang}, the extensions $K_1[f] K_2^\mathrm{tf} / K_1 K_2^\mathrm{tf}$ and $K_1[f] / K_1[f] \cap K_1 K_2^\mathrm{tf}$ are both Galois and have isomorphic Galois groups. 
	
	By assumption, the discriminant $f^2 D_1$ is not $2$-elementary. Hence, by Lemma~\ref{lem:classfield1}, we have that $K_1[f] \not \subset K_1 K_2^\mathrm{tf}$. So $K_1[f] \supsetneq K_1[f] \cap K_1 K_2^\mathrm{tf}$. Hence, the group $\gal(K_1[f] / K_1[f] \cap K_1 K_2^\mathrm{tf})$ is non-trivial, and so the group $\gal(K_1[f] K_2^\mathrm{tf} / K_1 K_2^\mathrm{tf})$ is also non-trivial. In particular, $K_1[f] K_2^\mathrm{tf} / K_1 K_2^\mathrm{tf}$ is a proper Galois extension.
\end{proof}

We are now in a position to deduce Proposition~\ref{prop:auto}.

\begin{proof}[Proof of Proposition~\ref{prop:auto}]
    For $i=1, 2$, let $K_i = \mathbb{Q}(\sqrt{D_i})$. Then $K_1 \neq K_2$. Since $\Delta = f^2 D_1$ is not $2$-elementary, the extension $K_1[f] K_2^\mathrm{tf} / K_1 K_2^\mathrm{tf}$ is a proper Galois extension by Lemma~\ref{lem:classfield2}. Hence there exists a non-identity element $\sigma \in \gal(K_1[f] K_2^\mathrm{tf} / K_1 K_2^\mathrm{tf})$. In particular, $\sigma$ must act non-trivially on $K_1[f]$. If $x$ is a singular modulus of discriminant $\Delta$, then $K_1(x) = K_1[f]$, and so we must have that $\sigma(x) \neq x$. If $y$ is a singular modulus with fundamental discriminant $D_2$, then $y \in K_2^\mathrm{tf}$ and hence $\sigma(y) = y$. The proposition thus follows by taking an extension of $\sigma$ to an automorphism of $\overline{\mathbb Q}$. 
\end{proof}

\section{Fields generated by singular moduli}

\subsection{Some preliminary results}

Here we recall some useful facts, which we will use repeatedly in the proof of Theorem~\ref{thm:main}. The first is a primitive element theorem for rational linear combinations of two singular moduli. In particular, it shows that the difference $x_1 - x_2$ of distinct singular moduli $x_1, x_2$ is always a primitive element for the field $\mathbb{Q}(x_1, x_2)$.

\begin{proposition}[{\cite[Ex.~1.4 \& Th.~1.5]{BiluFayeZhu19}, \cite[Th.~4.1]{FayeRiffaut18}}]\label{prop:prim}
    Let $x_1, x_2$ be distinct singular moduli of respective discriminants $\Delta_1, \Delta_2$. Let $\epsilon \in \mathbb{Q} \setminus \{0\}$. If
    \[ \mathbb{Q}(x_1 + \epsilon x_2) \subsetneq \mathbb{Q}(x_1, x_2),\]
    then 
    \[ \left[ \mathbb{Q}\left(x_1, x_2\right) : \mathbb{Q}\left(x_1 + \epsilon x_2\right) \right] = 2\]
    and
    \begin{enumerate}
        \item either $\Delta_1 = \Delta_2$ and $\epsilon = 1$;
        \item or all of the following hold:
        \begin{enumerate}
            \item $\Delta_1 \neq \Delta_2$,
            \item $\mathbb{Q}(x_1) = \mathbb{Q}(x_2)$ and this field is a degree $2$ extension of $\mathbb{Q}$,
            \item $\epsilon = -(x_1 - x_1')/(x_2 - x_2') \notin \{\pm 1\}$, where $x_i'$ denotes the non-trivial conjugate of $x_i$ over $\mathbb{Q}$.
        \end{enumerate}
    \end{enumerate}
\end{proposition}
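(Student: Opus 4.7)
The plan is a Galois-theoretic reduction followed by a rigidity argument for singular moduli. Write $L = \mathbb{Q}(x_1, x_2)$ and $F = \mathbb{Q}(x_1 + \epsilon x_2)$, and assume $F \subsetneq L$. Pick $\sigma \in \gal(\overline{\mathbb{Q}}/F)$ that does not fix $L$, and set $y_i = \sigma(x_i)$. Each $y_i$ is a Galois conjugate of $x_i$, hence a singular modulus of discriminant $\Delta_i$. The relation $\sigma(x_1 + \epsilon x_2) = x_1 + \epsilon x_2$ rearranges to
\[ x_1 - y_1 = \epsilon(y_2 - x_2). \]
If $y_1 = x_1$ then $y_2 = x_2$ (since $\epsilon \neq 0$) and $\sigma$ fixes $L$, a contradiction; symmetrically $y_2 \neq x_2$. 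More generally, since $L = F(x_1)$, for any $\tau \in \gal(\overline{\mathbb{Q}}/F)$ the element $\tau(x_2) = \epsilon^{-1}(x_1 + \epsilon x_2 - \tau(x_1))$ is determined by $\tau(x_1)$; thus $[L:F]$ equals the number of distinct pairs $(a,b)$ of singular moduli of respective discriminants $\Delta_1, \Delta_2$ with $a + \epsilon b = x_1 + \epsilon x_2$.

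Now I would split on whether $\Delta_1 = \Delta_2$. If $\Delta_1 \neq \Delta_2$, let $K_i = \mathbb{Q}(\sqrt{\Delta_i})$. Then $x_1 - y_1 \in K_1[f_1]$ and $y_2 - x_2 \in K_2[f_2]$ are both non-zero, and their ratio lies in $\mathbb{Q}^\times$. Using $K_1 \neq K_2$ and a Cohn-type intersection result (compare Proposition~\ref{prop:Cohn}) I would argue that this $\mathbb{Q}$-proportionality is very restrictive: it forces each $\mathbb{Q}(x_i)$ to be quadratic over $\mathbb{Q}$, i.e.~$h(\Delta_i) = 2$, and further that $\mathbb{Q}(x_1) = \mathbb{Q}(x_2)$ is a common imaginary quadratic field, yielding (2)(a) and (2)(b). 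Then $y_i$ must be the unique non-trivial $\mathbb{Q}$-conjugate $x_i'$ of $x_i$, the formula $\epsilon = -(x_1 - x_1')/(x_2 - x_2')$ follows by rearranging, and $\epsilon \notin \{\pm 1\}$ because otherwise $x_2 \in \{x_1, x_1'\}$, forcing $\Delta_1 = \Delta_2$. Since $L = \mathbb{Q}(x_1)$ has degree $2$ over $\mathbb{Q}$ and $F \subsetneq L$, the index $[L:F] = 2$ is automatic.

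If $\Delta_1 = \Delta_2 = \Delta$, the task is to show $\epsilon = 1$ and that the only non-trivial pair is the swap $(y_1, y_2) = (x_2, x_1)$; this is the main obstacle. Here $x_1, x_2, y_1, y_2$ are all singular moduli of the same discriminant $\Delta$, and what is needed is a rigidity theorem to the effect that the only non-trivial relations of the shape $a + \epsilon b = c + \epsilon d$ among four such singular moduli (with $a \neq c$ and $b \neq d$) are those arising from $\epsilon = 1$ and $\{a,b\} = \{c,d\}$. The natural approach is analytic: apply a further Galois automorphism so that one of $a, b, c, d$ is the dominant singular modulus of discriminant $\Delta$. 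By Proposition~\ref{prop:dombig} and Lemma~\ref{lem:bdsing}, this term is exponentially larger in absolute value than any other singular modulus of discriminant $\Delta$; isolating it in the relation and applying the triangle inequality should force a contradiction unless $\epsilon$ and the pairing are as claimed. The technical core is then to carry out this case analysis cleanly across all sign and position choices, and to dispose of the finitely many small $\lvert \Delta \rvert$ (where the analytic inequalities are tight) by direct computation.
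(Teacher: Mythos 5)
The paper does not prove this proposition at all: it is imported verbatim from Bilu--Faye--Zhu and Faye--Riffaut, so there is no internal argument to compare yours against. Judged on its own, your write-up is an outline whose two load-bearing steps are precisely the content of those cited theorems, and neither is actually carried out. In the case $\Delta_1\neq\Delta_2$ you assert that the proportionality $x_1-y_1=\epsilon(y_2-x_2)$ together with ``$K_1\neq K_2$ and a Cohn-type intersection result'' forces both fields to be quadratic. But first, $\Delta_1\neq\Delta_2$ does not give $K_1\neq K_2$ (take $\Delta_1=-4$, $\Delta_2=-16$), so the Cohn-type input is unavailable in the equal-fundamental-discriminant subcase; and second, even when $K_1\neq K_2$, deducing $h(\Delta_1)=h(\Delta_2)=2$ from the $\mathbb{Q}$-proportionality of the two conjugate differences \emph{is} the hard theorem, not a corollary of the setup --- in the references it requires a combination of class-field-theoretic and quantitative analytic arguments. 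Your justification of $\epsilon\notin\{\pm1\}$ is also invalid: $x_1-x_1'=\mp(x_2-x_2')$ only matches the ``irrational parts'' of $x_1$ and $x_2$ inside the common quadratic field and does not force $x_2\in\{x_1,x_1'\}$; excluding $\epsilon=\pm1$ here amounts to knowing that sums and differences of singular moduli with distinct discriminants always generate all of $\mathbb{Q}(x_1,x_2)$, which is again part of the statement being proved.

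In the case $\Delta_1=\Delta_2$ the rigidity statement you need --- that $a+\epsilon b=c+\epsilon d$ with $a\neq c$, $b\neq d$, all of discriminant $\Delta$, forces $\epsilon=1$ and $\{a,b\}=\{c,d\}$ --- is named but not proved, and the analytic strategy you sketch does not close as described. After conjugating so that one term is dominant, the triangle inequality compares $\lvert a-c\rvert$ with $\lvert\epsilon\rvert\,\lvert d-b\rvert$; since $\epsilon$ ranges over all of $\mathbb{Q}\setminus\{0\}$, no contradiction follows until you bound $\lvert\epsilon\rvert$ from both sides (conjugating twice, once to make a term on each side dominant) and dispose of the configurations in which the dominant singular modulus occurs on both sides of the equation (e.g.\ $a=d$). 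Even then, the inequalities only confine $\lvert\epsilon\rvert$ to a neighbourhood of $1$; pinning $\epsilon$ down to exactly $1$ requires a further argument controlling its numerator and denominator. None of this is impossible --- it is essentially what Bilu--Faye--Zhu do over several pages --- but as written the proposal defers the entire difficulty to ``the technical core,'' so it is a plan rather than a proof. In the context of this paper the correct move is simply to cite the result, as the authors do.
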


\begin{proposition}[{\cite[Lemma~7.1]{BiluFayeZhu19}}]\label{prop:2fields}
    Let $x_1, x_1'$ be singular moduli of discriminant $\Delta_1$ and let $x_2, x_2'$ be singular moduli of discriminant $\Delta_2$. Write $D_1, D_2$ for the respective fundamental discriminants. Suppose that $\mathbb{Q}(x_1, x_1') = \mathbb{Q}(x_2, x_2')$.
    \begin{enumerate}
        \item If $D_1 \neq D_2$, then $\mathbb{Q}(x_1) = \mathbb{Q}(x_2)$.
        \item If $D_1 = D_2$, then $K(x_1) = K(x_2)$, where $K = \mathbb{Q}(\sqrt{D_1}) = \mathbb{Q}(\sqrt{D_2})$.
    \end{enumerate}
\end{proposition}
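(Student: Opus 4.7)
The plan is to reduce both parts to an analysis of the compositum $L \cdot K_i$, where $L := \mathbb{Q}(x_1, x_1') = \mathbb{Q}(x_2, x_2')$ and $K_i := \mathbb{Q}(\sqrt{D_i})$, using the classical CM theoretic identity $K_i \cdot \mathbb{Q}(x_i) = K_i[f_i]$. Since $x_i, x_i' \in K_i[f_i]$, one has $L \subseteq K_i[f_i]$; while $\mathbb{Q}(x_i) \subseteq L$ gives $L K_i \supseteq K_i \cdot \mathbb{Q}(x_i) = K_i[f_i]$. Combining, $L K_i = K_i[f_i]$ for $i = 1, 2$. These two identities are the foundation of everything that follows.

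For part (2), where $K := K_1 = K_2$, the argument is essentially immediate: the identity above gives $L K = K[f_1]$ and $L K = K[f_2]$ simultaneously, so $K[f_1] = K[f_2]$, and hence $K(x_1) = K[f_1] = K[f_2] = K(x_2)$, as required.

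For part (1), where $K_1 \neq K_2$, the strategy is to first deduce that both $\Delta_1$ and $\Delta_2$ are forced to be $2$-elementary, and then use Proposition~\ref{prop:2elemequiv} to collapse the field generated by the two conjugates down to the single one. From $L K_1 = K_1[f_1]$ and $L \subseteq K_2[f_2]$ we obtain
\[ K_1[f_1] = L K_1 \subseteq K_2[f_2] \cdot K_1 \subseteq K_1 K_2^{\mathrm{tf}}, \]
and trivially $K_1[f_1] \subseteq K_1^{\mathrm{tf}}$. Therefore $K_1[f_1] \subseteq K_1^{\mathrm{tf}} \cap K_1 K_2^{\mathrm{tf}}$, whose Galois group over $K_1$ is $2$-elementary by Proposition~\ref{prop:Cohn} (Cohn's theorem); since $\gal(K_1[f_1]/K_1)$ is a quotient of this group, it too is $2$-elementary, i.e.\ $\Delta_1$ is a $2$-elementary discriminant. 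By symmetry $\Delta_2$ is also $2$-elementary. Proposition~\ref{prop:2elemequiv} then yields $\mathbb{Q}(x_i) = K_i[f_i]$ for $i=1,2$; in particular $x_i' \in K_i[f_i] = \mathbb{Q}(x_i)$, so $L = \mathbb{Q}(x_1) = \mathbb{Q}(x_2)$, which is the desired conclusion.

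The main obstacle is packaging the hypothesis $\mathbb{Q}(x_1, x_1') = \mathbb{Q}(x_2, x_2')$ into a usable containment between ring class fields: one must simultaneously exploit that $L$ sits inside $K_i[f_i]$ (from containment of the generators) \emph{and} that $L$ is large enough that $LK_i$ recovers $K_i[f_i]$ (from $\mathbb{Q}(x_i) \subseteq L$). Once that containment is in hand, part (1) is closed by Cohn's theorem combined with Proposition~\ref{prop:2elemequiv}, and part (2) reduces to elementary compositum arithmetic.
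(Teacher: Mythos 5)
The paper does not actually prove this proposition: it is imported verbatim as \cite[Lemma~7.1]{BiluFayeZhu19}, so any comparison is with that external source rather than with an argument in the text. Your proof is a self-contained derivation from the paper's own toolkit, and its skeleton is sound. The identity $LK_i = K_i[f_i]$ (from $\mathbb{Q}(x_i) \subseteq L \subseteq K_i[f_i]$ and $K_i(x_i) = K_i[f_i]$) is correct and immediately settles part (2). For part (1), the chain $K_1[f_1] = LK_1 \subseteq K_1 K_2[f_2] \subseteq K_1 K_2^{\mathrm{tf}}$ together with Proposition~\ref{prop:Cohn} shows $\Delta_1$ (and by symmetry $\Delta_2$) is $2$-elementary; this is exactly the content of the paper's Lemma~\ref{lem:classfield1}, which you are in effect rederiving, and it even yields the $2$-elementarity conclusion of Proposition~\ref{prop:1field}(1) directly from equality of the two-generator fields.

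There is one genuinely false intermediate assertion: Proposition~\ref{prop:2elemequiv} does \emph{not} give $\mathbb{Q}(x_i) = K_i[f_i]$, and this equality can never hold, since $[K_i[f_i]:\mathbb{Q}] = 2h(\Delta_i) = 2[\mathbb{Q}(x_i):\mathbb{Q}]$ and $\mathbb{Q}(x_i)$ has a real embedding (its dominant conjugate is real) whereas $K_i[f_i]$ contains the imaginary quadratic field $K_i$. The conclusion you draw from it, namely $x_i' \in \mathbb{Q}(x_i)$, is nevertheless correct and follows from the same proposition by the right route: $2$-elementarity of $\Delta_i$ is equivalent to $\mathbb{Q}(x_i)/\mathbb{Q}$ being Galois (equivalence (4)), and since $x_i'$ has the same discriminant as $x_i$ it is a root of the same minimal polynomial, hence lies in the normal field $\mathbb{Q}(x_i)$. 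With that one step repaired, $L = \mathbb{Q}(x_1) = \mathbb{Q}(x_2)$ and the proof is complete.
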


\begin{proposition}\label{prop:1field}
    Let $x_1, x_2$ be distinct singular moduli with respective discriminants $\Delta_1, \Delta_2$ and fundamental discriminants $D_1, D_2$.
    \begin{enumerate}
        \item If $D_1 \neq D_2$ and $\mathbb{Q}(x_1) = \mathbb{Q}(x_2)$, then the discriminants $\Delta_1, \Delta_2$ are $2$-elementary and $\lvert \Delta_1 \rvert, \lvert \Delta_2 \rvert \leq 7392$.
        \item If $D_1 = D_2$ and $K(x_1) = K(x_2)$, where $K = \mathbb{Q}(\sqrt{D_1}) = \mathbb{Q}(\sqrt{D_1})$, then either $x_1, x_2 \in \mathbb{Q}$ or $\Delta_1/\Delta_2 \in \{1, 4, 1/4\}$.
    \end{enumerate}
\end{proposition}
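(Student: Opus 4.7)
My plan is to treat the two parts of Proposition~\ref{prop:1field} separately, using different techniques.

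\emph{Part (1).} Since $\mathbb{Q}(x_1) = \mathbb{Q}(x_2)$, I note that $x_1 \in \mathbb{Q}(x_2) \subseteq K_2[f_2] \subseteq K_2^{\mathrm{tf}}$, where $K_i = \mathbb{Q}(\sqrt{D_i})$. Hence $K_1[f_1] = K_1(x_1) \subseteq K_1 K_2^{\mathrm{tf}}$, and since $D_1 \neq D_2$ implies $K_1 \neq K_2$, Lemma~\ref{lem:classfield1} applies and shows that $\Delta_1$ is $2$-elementary. The symmetric argument, swapping the roles of $1$ and $2$, gives that $\Delta_2$ is also $2$-elementary. Because $D_1 \neq D_2$, at most one of them is $D_*$; without loss of generality $D_2 \neq D_*$, so Proposition~\ref{prop:Tat} gives $h(\Delta_2) \leq 16$ and $|\Delta_2| \leq 7392$. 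Since the singular moduli of a given discriminant form a single Galois orbit over $\mathbb{Q}$, $h(\Delta_i) = [\mathbb{Q}(x_i) : \mathbb{Q}]$, and the hypothesis then forces $h(\Delta_1) = h(\Delta_2) \leq 16$, whence $|\Delta_1| \leq 7392$ by a second application of Proposition~\ref{prop:Tat}.

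\emph{Part (2).} Writing $\Delta_i = f_i^2 D$ and $K = \mathbb{Q}(\sqrt{D})$, the hypothesis becomes $K[f_1] = K[f_2]$. If this common field equals $K$, then $h(\Delta_i) = [K(x_i):K] = 1$, so each $x_i$ is the unique (hence dominant, hence real) singular modulus of its discriminant, and $x_i \in K \cap \mathbb{R} = \mathbb{Q}$. Otherwise I must show $f_1 / f_2 \in \{1, 2, 1/2\}$, equivalently $\Delta_1 / \Delta_2 \in \{1, 4, 1/4\}$.

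For this remaining case, my plan uses the explicit class number formula $[K[f]:K] = h(f^2 D) = h(D) \cdot f \prod_{p|f}(1 - (D/p)/p)/[\mathcal{O}_K^*:\mathcal{O}_f^*]$. Setting $\Psi(f) := [K[f]:K[1]]$, the hypothesis $K[f_1] = K[f_2]$ gives $\Psi(f_1) = \Psi(f_2)$, and Lemma~\ref{lem:compring} gives $K[f_1] = K[f_1] K[f_2] \subseteq K[\mathrm{lcm}(f_1, f_2)]$. The key structural input is that for coprime $m, n$ the Galois group $\gal(K[mn]/K[1])$ decomposes (up to a small correction factor when $D \in \{-3, -4\}$) as $\gal(K[m]/K[1]) \times \gal(K[n]/K[1])$, so that $K[f]$ determines the prime-power parts of $f$ independently. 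The only prime at which this determination can fail is $p = 2$, and only when $1 - (D/2)/2$ or the unit index $[\mathcal{O}_K^*:\mathcal{O}_f^*]$ forces the $p$-part of $\Psi$ to collapse—that is, when $D \equiv 1 \pmod 8$ or $D \in \{-3, -4\}$. A case-by-case analysis on $D \bmod 8$ and on whether $D \in \{-3, -4\}$ then shows that $K[f_1] = K[f_2]$ forces $f_1, f_2$ to agree at every odd prime and to differ in their $2$-adic valuations by at most one, giving $f_1/f_2 \in \{1, 2, 1/2\}$.

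\emph{The main obstacle} is Part (2), and in particular the case analysis pinning down when two ring class fields of the same imaginary quadratic field can coincide. The generic cases $D < -4$ with $D \not\equiv 1 \pmod 8$ are essentially immediate, but the cases $D \equiv 1 \pmod 8$ and, especially, $D \in \{-3, -4\}$ demand careful bookkeeping with the unit-index term $[\mathcal{O}_K^*:\mathcal{O}_f^*]$, since the enlarged unit groups disrupt the clean multiplicativity of $\Psi$ and of the ring class field Galois groups and introduce additional small collapses that must be tracked individually.
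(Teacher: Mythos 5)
Your part (1) is complete and correct, and it takes a genuinely different route from the paper: the paper disposes of (1) by citing \cite[Prop.~2.8 \& Cor.~2.13]{BiluGunTron22} together with Proposition~\ref{prop:Tat}, whereas you derive the $2$-elementarity of both $\Delta_1$ and $\Delta_2$ directly from Cohn's theorem via Lemma~\ref{lem:classfield1} (the inclusion $K_1[f_1]=K_1\mathbb{Q}(x_2)\subseteq K_1K_2^{\mathrm{tf}}$ is exactly what that lemma needs), and then obtain the bound $7392$ from $h(\Delta_1)=[\mathbb{Q}(x_1):\mathbb{Q}]=[\mathbb{Q}(x_2):\mathbb{Q}]=h(\Delta_2)$ together with the two halves of Proposition~\ref{prop:Tat}. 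That argument is self-contained within the paper's own toolkit and is a perfectly good substitute for the citation. Your treatment of the branch $K(x_1)=K(x_2)=K$ in part (2) also coincides with the paper's observation that a singular modulus lying in an imaginary quadratic field must be rational.

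The gap is in the remaining branch of part (2). The paper handles it by quoting \cite[Lemma~7.2]{BiluFayeZhu19}; you propose to reprove that lemma, and your write-up stops exactly where the work begins. Two steps are asserted rather than established. First, the prime-by-prime reduction: it is not enough that $\gal(K[L]/K[1])$, $L=\lcm(f_1,f_2)$, decomposes as a product over primes (which does hold for $D<-4$, since $\mathcal{O}_K^*=\{\pm1\}$ is absorbed into $(\mathbb{Z}/L\mathbb{Z})^*$); you also need that the subgroup corresponding to $K[f_i]$ is itself the product of its $p$-components, so that $K[f_1]=K[f_2]$ forces $K[p^{v_p(f_1)}]=K[p^{v_p(f_2)}]$ for every $p$. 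This is the step that actually lets you compare conductors prime by prime, and it is nowhere justified. Second, and more seriously, for $D\in\{-3,-4\}$ the extra roots of unity destroy the product decomposition itself (one quotients by a cyclic group whose generator has nontrivial components at several primes), so the ``case-by-case analysis'' you defer is precisely the nontrivial content of the cited lemma, not routine bookkeeping; you identify this yourself as the main obstacle but do not overcome it. As written, part (2) in the non-rational case is a plausible plan rather than a proof: either carry out that analysis in full or do what the paper does and invoke \cite[Lemma~7.2]{BiluFayeZhu19}.
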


\begin{proof}
    (1) follows from \cite[Prop.~2.8 \& Cor.~2.13]{BiluGunTron22} and Proposition~\ref{prop:Tat}. For (2), recall that the dominant singular modulus of a given discriminant is always real. Hence, every singular modulus has a real Galois conjugate over $\mathbb{Q}$. In particular, any singular modulus contained in an imaginary quadratic field must in fact be rational; hence either $K=K(x_1)=K(x_2)$ and $x_1, x_2 \in \mathbb Q$, or $K \neq K(x_1)=K(x_2)$, in which case the desired result follows from \cite[Lemma~7.2]{BiluFayeZhu19}.
\end{proof}

\subsection{Singular moduli generating \texorpdfstring{$2$}{2}-elementary fields}

\begin{lemma}\label{lem:compositum2elem}
Let $x_1, x_2$ be singular moduli such that their respective discriminants $\Delta_1, \Delta_2$ are both $2$-elementary. Let $\epsilon \in \mathbb{Q}$. Then the field $\mathbb{Q}(x_1 + \epsilon x_2)$ is $2$-elementary.
\end{lemma}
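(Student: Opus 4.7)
The plan is to use Proposition~\ref{prop:2elemequiv} to translate the hypothesis on the discriminants into a statement about the fields $\mathbb{Q}(x_1)$ and $\mathbb{Q}(x_2)$, and then apply elementary Galois theory of compositums to the element $x_1 + \epsilon x_2 \in \mathbb{Q}(x_1, x_2)$.

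First I would observe that, by Proposition~\ref{prop:2elemequiv}, the hypothesis that $\Delta_1$ and $\Delta_2$ are $2$-elementary implies that $\mathbb{Q}(x_1)/\mathbb{Q}$ and $\mathbb{Q}(x_2)/\mathbb{Q}$ are both $2$-elementary number fields, i.e.~Galois over $\mathbb{Q}$ with Galois group of the form $(\mathbb{Z}/2\mathbb{Z})^n$. Next I would consider the compositum $L = \mathbb{Q}(x_1)\mathbb{Q}(x_2) = \mathbb{Q}(x_1, x_2)$. Since it is the compositum of two Galois extensions of $\mathbb{Q}$, it is itself Galois over $\mathbb{Q}$, and the standard restriction map embeds
\[\gal(L/\mathbb{Q}) \hookrightarrow \gal(\mathbb{Q}(x_1)/\mathbb{Q}) \times \gal(\mathbb{Q}(x_2)/\mathbb{Q}).\]
The right-hand side is $2$-elementary (the product of two $2$-elementary groups is $2$-elementary), and subgroups of $2$-elementary groups are $2$-elementary, so $\gal(L/\mathbb{Q})$ is $2$-elementary. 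Hence $L$ is a $2$-elementary number field.

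Finally I would conclude by noting that $x_1 + \epsilon x_2 \in L$, so the field $F := \mathbb{Q}(x_1 + \epsilon x_2)$ is an intermediate field of the extension $L/\mathbb{Q}$. Since $\gal(L/\mathbb{Q})$ is abelian, every intermediate subgroup is normal, so $F/\mathbb{Q}$ is Galois, and
\[\gal(F/\mathbb{Q}) \cong \gal(L/\mathbb{Q})/\gal(L/F)\]
is a quotient of a $2$-elementary group, hence again $2$-elementary. This shows that $F = \mathbb{Q}(x_1 + \epsilon x_2)$ is a $2$-elementary number field, which is precisely what we wanted.

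The argument is essentially formal: no real obstacle arises, since the only facts needed are the characterisation in Proposition~\ref{prop:2elemequiv}, that $2$-elementary groups are closed under products, subgroups, and quotients, and that every subgroup of an abelian group is normal. I would not expect this lemma to require any use of the analytic estimates or of Proposition~\ref{prop:Cohn}; the work has already been done in establishing Proposition~\ref{prop:2elemequiv}.
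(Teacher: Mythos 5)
Your proof is correct and follows essentially the same route as the paper's: apply Proposition~\ref{prop:2elemequiv} to each $x_i$, note that $\gal(\mathbb{Q}(x_1,x_2)/\mathbb{Q})$ embeds into the product of the two $2$-elementary Galois groups, and pass to the subfield $\mathbb{Q}(x_1+\epsilon x_2)$ via a quotient. The extra detail you supply (normality of intermediate fields in an abelian extension) is implicit in the paper's version but adds nothing essentially different.
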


\begin{proof}
    For $i=1,2$, Proposition~\ref{prop:2elemequiv} implies that the extension $\mathbb{Q}(x_i) / \mathbb{Q}$ is Galois and $ \gal \left(\mathbb{Q}\left(x_i\right) / \mathbb{Q}\right) \cong (\mathbb{Z}/ 2 \mathbb{Z})^{n_i}$ for some $n_i \in \mathbb{Z}_{ \geq 0}$. Hence, the extension $\mathbb{Q}(x_1, x_2) / \mathbb{Q}$ is Galois and has Galois group isomorphic to a subgroup of $(\mathbb{Z}/ 2 \mathbb{Z})^{n_1} \times (\mathbb{Z}/ 2 \mathbb{Z})^{n_2}$. In particular, the extension  $\mathbb{Q}(x_1, x_2) / \mathbb{Q}$ is $2$-elementary, and hence the extension $\mathbb{Q}(x_1 + \epsilon x_2) / \mathbb{Q}$ is also $2$-elementary.
\end{proof}

We now introduce a further bit of terminology from \cite[\S2.2.2]{BiluGunTron22}. Call a finite abelian group $G$ almost $2$-elementary if $G$ has a $2$-elementary subgroup of index $2$. A discriminant $\Delta = f^2 D$ is called almost $2$-elementary if the group $\gal(K[f] / K)$ is almost $2$-elementary, where $K = \mathbb{Q} (\sqrt{D})$. Note that every $2$-elementary discriminant is almost $2$-elementary.
    
    \begin{lemma}\label{lem:almost2elem}
    Let $x_1, x_2$ be distinct singular moduli with respective discriminants $\Delta_1, \Delta_2$. Let $\epsilon \in \mathbb{Q} \setminus \{0\}$. Suppose that the field $\mathbb{Q}(x_1 + \epsilon x_2)$ is $2$-elementary. If either $\Delta_1 \neq \Delta_2$ or $\epsilon \neq 1$, then the discriminants $\Delta_1, \Delta_2$ are $2$-elementary. If $\Delta_1 = \Delta_2$ and $\epsilon = 1$, then this discriminant is almost $2$-elementary. 
    \end{lemma}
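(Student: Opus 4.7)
The plan is to split on whether $\mathbb{Q}(y) = \mathbb{Q}(x_1, x_2)$ or not, where $y := x_1 + \epsilon x_2$, using Proposition~\ref{prop:prim}, and then, in the harder case, to translate the hypothesis on $\mathbb{Q}(y)$ into structural information about $\gal(K[f]/K) \cong \mathrm{cl}(\Delta)$ for the ring class field in question.

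If $\mathbb{Q}(y) = \mathbb{Q}(x_1, x_2)$, then $\mathbb{Q}(x_1, x_2)$ is $2$-elementary by hypothesis, hence abelian over $\mathbb{Q}$, and every subfield is again $2$-elementary; in particular each $\mathbb{Q}(x_i)$ is $2$-elementary, so both $\Delta_i$ are $2$-elementary by Proposition~\ref{prop:2elemequiv}. Otherwise, I would invoke Proposition~\ref{prop:prim}. In its conclusion~(2), $\mathbb{Q}(x_1) = \mathbb{Q}(x_2)$ has degree $2$ over $\mathbb{Q}$, which is trivially $2$-elementary, so again both $\Delta_i$ are $2$-elementary. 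This covers every case in which $\Delta_1 \neq \Delta_2$ or $\epsilon \neq 1$.

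The remaining case is Proposition~\ref{prop:prim}(1): $\Delta_1 = \Delta_2 =: \Delta$, $\epsilon = 1$, and $[\mathbb{Q}(x_1, x_2) : \mathbb{Q}(y)] = 2$. Let $D$ and $f$ be the fundamental discriminant and conductor of $\Delta$, and $K = \mathbb{Q}(\sqrt{D})$, so $K(x_i) = K[f]$ and $\gal(K[f]/K) \cong \mathrm{cl}(\Delta)$. Since $\mathbb{Q}(y)$ is $2$-elementary and contained in $K[f]$, Proposition~\ref{prop:Lang} identifies $\gal(K(y)/K)$ with a subgroup of $\gal(\mathbb{Q}(y)/\mathbb{Q})$, and so $\gal(K(y)/K)$ is itself $2$-elementary. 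Set $N := \gal(K[f]/K(y)) \leq \mathrm{cl}(\Delta)$, so $\mathrm{cl}(\Delta)/N$ is $2$-elementary. A standard compositum estimate gives $[K[f] : K(y)] = [K \cdot \mathbb{Q}(x_1, x_2) : K \cdot \mathbb{Q}(y)] \leq [\mathbb{Q}(x_1, x_2) : \mathbb{Q}(y)] = 2$, so $|N| \leq 2$. Combining, every $g \in \mathrm{cl}(\Delta)$ satisfies $g^2 \in N$, so the $2$-torsion subgroup $A := \{g \in \mathrm{cl}(\Delta) : g^2 = e\}$ is $2$-elementary with $[\mathrm{cl}(\Delta) : A] = |\mathrm{cl}(\Delta)^2| \leq |N| \leq 2$, which is exactly the almost $2$-elementary condition.

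The main obstacle is organizing the final step of the last case: one must simultaneously bound $|N|$ from above (via a degree count on the compositum $K \cdot \mathbb{Q}(x_1, x_2)$ over $K \cdot \mathbb{Q}(y)$) and the order of $\mathrm{cl}(\Delta)/N$ (via the $2$-elementary hypothesis on $\mathbb{Q}(y)$ transported to $K$ through Proposition~\ref{prop:Lang}), and then exploit the identity $[\mathrm{cl}(\Delta) : A] = |\mathrm{cl}(\Delta)^2|$ coming from the squaring homomorphism to extract a $2$-elementary subgroup of index at most $2$ in $\mathrm{cl}(\Delta)$.
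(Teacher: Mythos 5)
Your proof is correct and follows essentially the same route as the paper: a case split driven by Proposition~\ref{prop:prim}, with the generic case handled by noting that subfields of a $2$-elementary extension are $2$-elementary, and the exceptional case $\Delta_1=\Delta_2$, $\epsilon=1$ handled by an index-$2$ computation over $K$ via Proposition~\ref{prop:Lang}. Your explicit use of the squaring homomorphism on $\mathrm{cl}(\Delta)$ to pass from ``the quotient by a subgroup of order at most $2$ is $2$-elementary'' to ``almost $2$-elementary'' just spells out a step the paper leaves implicit.
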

    
    \begin{proof}
        For $i \in \{1, 2\}$, let $K_i = \mathbb{Q}(\sqrt{\Delta_i})$. By assumption, the extension $\mathbb{Q}(x_1 + \epsilon x_2) / \mathbb{Q}$ is Galois and 
        \[ \gal\left(\mathbb{Q}\left(x_1 + \epsilon x_2\right) / \mathbb{Q}\right) \cong (\mathbb{Z} / 2 \mathbb{Z})^n\]
        for some $n \in \mathbb{Z}_{\geq 0}$. For each $i \in \{1, 2\}$, we have the following diagram of field extensions:
\[
\begin{tikzcd}[row sep=2em, column sep=2em, arrows=-]
& K_i(x_1, x_2) \arrow[d]& \\
& K_i(x_1 + \epsilon x_2) \arrow[dl] \arrow[dr] &\\
\mathbb{Q}(x_1 + \epsilon x_2) \arrow[dr] & & K_i \arrow[dl] \\
& \mathbb{Q}(x_1 + \epsilon x_2) \cap K_i \arrow[d] &\\
& \mathbb{Q}. &
\end{tikzcd}
\]
By Proposition~\ref{prop:Lang}, we have that $K_i(x_1 + \epsilon x_2) / K_i$ is a Galois extension and
\[ \gal\left(K_i\left(x_1 + \epsilon x_2\right) / K_i\right) \cong \gal\left(\mathbb{Q}\left(x_1 + \epsilon x_2\right) / \mathbb{Q}\left(x_1 + \epsilon x_2\right) \cap K_i\right) \cong (\mathbb{Z} / 2 \mathbb{Z})^{m_i}\]
for some $m_i \leq n$.

Suppose that $K_i(x_1 + \epsilon x_2) = K_i(x_1, x_2)$. So the extension $K_i(x_1, x_2) / K_i$ is Galois and
\[ \gal\left(K_i\left(x_1, x_2\right) / K_i\right) \cong  (\mathbb{Z} / 2 \mathbb{Z})^{m_i}.\]
In particular, the group $\gal(K_i(x_i) / K_i)$ is isomorphic to a quotient of $(\mathbb{Z} / 2 \mathbb{Z})^{m_i}$ and hence must be $2$-elementary. Hence, the discriminant $\Delta_i$ is $2$-elementary.

So we may suppose that, for some $i \in \{1, 2\}$, 
\[ K_i(x_1 + \epsilon x_2) \subsetneq K_i(x_1, x_2).\]
This implies that
\[ \mathbb{Q}(x_1 + \epsilon x_2) \subsetneq \mathbb{Q}(x_1, x_2).\]
We thus must be in one of the two exceptional cases in Proposition~\ref{prop:prim}. If we are in exceptional case (2) of Proposition~\ref{prop:prim}, then 
\[ [ \mathbb{Q}(x_1) : \mathbb{Q} ] = [ \mathbb{Q}(x_2) : \mathbb{Q} ] = 2\]
and so certainly $\Delta_1, \Delta_2$ are $2$-elementary. We may thus assume that we are in exceptional case (1) of Proposition~\ref{prop:prim}, i.e.~$\Delta_1 = \Delta_2$ and $\epsilon = 1$.

Therefore, $K_1 = K_2$ and $K_1(x_1) = K_1(x_2)$. Proposition~\ref{prop:prim} also implies that
\[ \left[ \mathbb{Q}\left(x_1, x_2\right) : \mathbb{Q}\left(x_1 + x_2\right)\right] = 2,\]
and hence
\[ \left[K_1\left(x_1\right) : K_1\left(x_1 + x_2\right)\right] = 2.\]
Consequently,
\[ \gal\left(K_1\left(x_1\right) / K_1\right) / \gal\left(K_1/\mathbb{Q}\right) \cong (\mathbb{Z} / 2 \mathbb{Z})^{m_1}.\]
In particular, the group $\gal(K_1(x_1) / K_1)$ is almost $2$-elementary, and hence the discriminant $\Delta_1$ is almost $2$-elementary.
    \end{proof}

   \begin{lemma}\label{lem:all2elem}
    Let $x_1, x_2, x_3, x_4$ be pairwise distinct singular moduli with respective discriminants $\Delta_1, \Delta_2, \Delta_3, \Delta_4$. Suppose that $a_1, a_2, a_3, a_4 \in \mathbb{Q} \setminus \{0\}$ are such that
    \[ a_1 x_1 + a_2 x_2 + a_3 x_3 + a_4 x_4 \in \mathbb{Q}.\]
   If $\Delta_1, \Delta_2$ are both $2$-elementary, then:
    \begin{enumerate}
        \item either $\Delta_3, \Delta_4$ are both $2$-elementary,
        \item or $\Delta_3 = \Delta_4$ is almost $2$-elementary and $a_3 = a_4$.
    \end{enumerate}
   \end{lemma}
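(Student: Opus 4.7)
The plan is to collapse the four-term relation into a two-term one and then apply the two preceding lemmas in sequence. The hypothesis can be rewritten as
\[ a_3 x_3 + a_4 x_4 = r - (a_1 x_1 + a_2 x_2) \]
for some $r \in \mathbb{Q}$, and since the two generators differ only by a rational constant and a sign, one has the equality of fields
\[ \mathbb{Q}(a_3 x_3 + a_4 x_4) = \mathbb{Q}(a_1 x_1 + a_2 x_2). \]
The key observation is that the $2$-elementary hypothesis on $\Delta_1, \Delta_2$ controls the right-hand field, and this control then transfers to the left-hand field.

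Concretely, because $\Delta_1, \Delta_2$ are both $2$-elementary, Lemma~\ref{lem:compositum2elem} applied with $\epsilon = a_2/a_1$ shows that $\mathbb{Q}(x_1 + (a_2/a_1) x_2)$ is a $2$-elementary field; this coincides with $\mathbb{Q}(a_1 x_1 + a_2 x_2)$ after rescaling by $a_1 \neq 0$. Combining with the displayed equality and rescaling by $a_3 \neq 0$, I deduce that $\mathbb{Q}(x_3 + (a_4/a_3) x_4)$ is $2$-elementary. I then apply Lemma~\ref{lem:almost2elem} to the distinct singular moduli $x_3, x_4$ with $\epsilon' = a_4/a_3 \in \mathbb{Q} \setminus \{0\}$; its dichotomy matches the two conclusions of the present lemma exactly. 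Either $\Delta_3 \neq \Delta_4$ or $\epsilon' \neq 1$, in which case $\Delta_3$ and $\Delta_4$ are both $2$-elementary, giving case~(1); or $\Delta_3 = \Delta_4$ and $\epsilon' = 1$, i.e.~$a_3 = a_4$, in which case the common discriminant is almost $2$-elementary, giving case~(2).

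There is no serious obstacle here: the lemma is a formal cascade of Lemmata~\ref{lem:compositum2elem} and~\ref{lem:almost2elem}, and the only real content is the recognition that the rational linear relation forces $\mathbb{Q}(a_3 x_3 + a_4 x_4) = \mathbb{Q}(a_1 x_1 + a_2 x_2)$. The only bookkeeping to watch is the normalisation, so that the condition $\epsilon' = 1$ in Lemma~\ref{lem:almost2elem} is read correctly as the condition $a_3 = a_4$ in the conclusion, and the pairwise distinctness of the $x_i$ guarantees $x_3 \neq x_4$, which is needed to invoke Lemma~\ref{lem:almost2elem}.
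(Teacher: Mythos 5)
Your proof is correct and is essentially identical to the paper's, which likewise deduces the lemma immediately from Lemmata~\ref{lem:compositum2elem} and~\ref{lem:almost2elem} via the field equality $\mathbb{Q}\bigl(x_1 + \tfrac{a_2}{a_1} x_2\bigr) = \mathbb{Q}\bigl(x_3 + \tfrac{a_4}{a_3} x_4\bigr)$. The normalisation bookkeeping you flag ($\epsilon' = a_4/a_3 = 1$ reading as $a_3 = a_4$) is exactly right.
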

   
   \begin{proof}
      The result follows immediately from Lemma~\ref{lem:compositum2elem} and Lemma~\ref{lem:almost2elem}, since
       \[ \mathbb{Q}\left(x_1 + \frac{a_2}{a_1} x_2\right) = \mathbb{Q}\left(x_3 + \frac{a_4}{a_3} x_4\right). \qedhere\]
   \end{proof}

	\section{Beginning the proof of Theorem~{\ref{thm:main}}}
	
	The ``if'' direction of Theorem~\ref{thm:main} is obvious; we will prove the ``only if''. Let $x_1, x_2, x_3, x_4$ be singular moduli. Suppose that
	\begin{align}\label{eq:main}
		x_1 - x_2 = x_3 - x_4.
		\end{align}
	We want to prove that either $(x_1, x_2) = (x_3, x_4)$ or $(x_1, x_3) = (x_2, x_4)$. Suppose then that $(x_1, x_2) \neq (x_3, x_4)$ and $(x_1, x_3) \neq (x_2, x_4)$. We will show that this leads to a contradiction.
	
	\subsection{The non-pairwise distinct case}
	
	Clearly, equation \eqref{eq:main} implies that $x_1 = x_2$ if and only if $x_3 = x_4$. Similarly, $x_1 = x_3$ if and only if $x_2 = x_4$. We may thus assume that $x_1 \neq x_2$, $x_3 \neq x_4$, $x_1 \neq x_3$, and $x_2 \neq x_4$. So if $x_1, x_2, x_3, x_4$ are not pairwise distinct, then either $x_1 = x_4$ or $x_2 = x_3$, but not both. The equation \eqref{eq:main} thus reduces to either $2 x_1 = x_2 + x_3$ or $2 x_2 = x_1 + x_4$, where the three singular moduli occurring in the equation are pairwise distinct. This though contradicts the following result.
	
	\begin{lemma}\label{lem:three}
	The equation $2 x_1 = x_2 + x_3$ has no solutions with $x_1, x_2, x_3$ pairwise distinct singular moduli.
	\end{lemma}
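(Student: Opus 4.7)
The plan is to argue by contradiction: suppose $x_1, x_2, x_3$ are pairwise distinct singular moduli satisfying $2x_1 = x_2 + x_3$, and write $\Delta_i$, $D_i$ for the discriminant and fundamental discriminant of $x_i$. The relation is $\gal(\overline{\mathbb{Q}}/\mathbb{Q})$-equivariant, so for every $\sigma$ we have $2\sigma(x_1) = \sigma(x_2) + \sigma(x_3)$; and since the singular moduli of a fixed discriminant form a single Galois orbit, I may freely choose $\sigma$ to send any particular $x_i$ to any prescribed singular modulus of $\Delta_i$.

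The cleaner case is $|\Delta_1| \geq \max(|\Delta_2|, |\Delta_3|)$. I would pick $\sigma$ with $\sigma(x_1) = y$, the dominant singular modulus of $\Delta_1$, so that $2y = \sigma(x_2) + \sigma(x_3)$. Proposition~\ref{prop:dombig} gives $|\sigma(x_i)| \leq |y|$ for $i \in \{2,3\}$, with equality only when $\sigma(x_i) = y$. The triangle inequality then forces $2|y| \leq |\sigma(x_2)| + |\sigma(x_3)| \leq 2|y|$, so $\sigma(x_2) = \sigma(x_3) = y$, contradicting $x_2 \neq x_3$.

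In the remaining case, by the symmetry of the equation under $x_2 \leftrightarrow x_3$, I may assume $|\Delta_2| > |\Delta_1|$ and $|\Delta_2| \geq |\Delta_3|$. Rewriting as $x_2 = 2x_1 - x_3$ and picking $\sigma$ with $\sigma(x_2) = y_2$ the dominant of $\Delta_2$ yields $y_2 = 2\sigma(x_1) - \sigma(x_3)$; by Proposition~\ref{prop:dombig} and the pairwise distinctness of $x_2, x_3$ (needed when $\Delta_3 = \Delta_2$, since $y_2$ is the unique dominant of $\Delta_2$ and so $\sigma(x_3)$ is non-dominant), both $|\sigma(x_1)|$ and $|\sigma(x_3)|$ are strictly smaller than $|y_2|$. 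However, the resulting estimate $|y_2| \leq 2|\sigma(x_1)| + |\sigma(x_3)|$ combined with Lemmas~\ref{lem:bdsing} and~\ref{lem:small} is not on its own sharp enough, since for $|\Delta_2| \geq 6$ one has $e^{\pi\sqrt{|\Delta_2|}} < 3 e^{\pi\sqrt{|\Delta_2|-1}}$. To force the discriminants into a bounded range I would supplement the size bounds with Proposition~\ref{prop:auto}: assuming $\Delta_2$ is not $2$-elementary, for every fundamental discriminant $D' \neq D_2$ there is a $\sigma$ fixing every singular modulus of fundamental discriminant $D'$ but with $\sigma(x_2) \neq x_2$. Applied with $D' = D_1$ (when $D_1 \neq D_2$) this produces the identity $\sigma(x_2) - x_2 = -(\sigma(x_3) - x_3)$ with both sides nonzero, immediately ruling out $D_3 = D_1$; iterating with $D' = D_3$ (when $D_3 \neq D_2$) and combining with Lemma~\ref{lem:all2elem} applied to $\mathbb{Q}(x_1) = \mathbb{Q}(x_2 + x_3)$ forces all three of $\Delta_1, \Delta_2, \Delta_3$ to be $2$-elementary, with a degenerate almost $2$-elementary subcase when $\Delta_2 = \Delta_3$ and $\epsilon = 1$. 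Lemma~\ref{lem:2elemcond} and Proposition~\ref{prop:Tat} then bound the conductors by $60$ and, away from the exceptional fundamental discriminant $D_*$, the discriminants by $7392$, reducing the problem to a finite list of triples verifiable in PARI; cases involving $D_*$ are handled by separate Galois-theoretic arguments using the fact that $x_i$ with fundamental discriminant $D_*$ lie in $K_*^{\mathrm{tf}}$.

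The principal obstacle is this second case: the asymmetry of the equation $2x_1 = x_2 + x_3$ means the dominant modulus argument closes cleanly only when $|\Delta_1|$ is maximal, and analytic size bounds alone are not sharp enough to handle the configuration where the largest discriminant sits on the right-hand side. Supplementing them with class field theory (via Proposition~\ref{prop:auto}) and the $2$-elementary reduction machinery is what ultimately enables the finite computational check.
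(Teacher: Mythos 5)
Your opening reduction is correct and coincides with the paper's: conjugating so that $x_1$ is dominant and applying Proposition~\ref{prop:dombig} forces $\lvert\Delta_1\rvert<\max\{\lvert\Delta_2\rvert,\lvert\Delta_3\rvert\}$, and you are also right that in the remaining configuration the size bounds of Lemmata~\ref{lem:bdsing} and~\ref{lem:small} alone cannot close the argument. The difficulty is that everything after this point is a strategy outline rather than a proof, and the outline has concrete holes where the tools you invoke do not apply. First, the claimed reduction to ``all three discriminants are $2$-elementary'' is not established: Proposition~\ref{prop:auto} is unavailable when $D_1=D_2=D_3$ (a case you never treat, though it yields to Lemma~\ref{lem:bdfund}); when $D_2=D_3$ your conjugation trick gives $\sigma(x_2)-x_2=x_3-\sigma(x_3)$, and Propositions~\ref{prop:prim}, \ref{prop:2fields} and~\ref{prop:1field} then yield only $\Delta_2/\Delta_3\in\{1,4,1/4\}$, not $2$-elementariness; and Lemma~\ref{lem:all2elem} is stated for four pairwise distinct moduli, so what you actually need is Lemmata~\ref{lem:compositum2elem} and~\ref{lem:almost2elem}, whose hypotheses you have not verified. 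Second, in your ``degenerate'' subcase where $\Delta_2=\Delta_3$ is merely almost $2$-elementary, neither Lemma~\ref{lem:2elemcond} nor Proposition~\ref{prop:Tat} applies (both concern $2$-elementary discriminants only), and since you are in the regime $\lvert\Delta_2\rvert>\lvert\Delta_1\rvert$, bounding $\Delta_1$ does not bound $\Delta_2$ without a further size argument you do not supply. Third, the cases involving the exceptional discriminant $D_*$, where $\lvert\Delta_i\rvert$ can be far larger than $7392$ even though $\Delta_i$ is $2$-elementary, are precisely where the real work lies (they occupy most of Sections~\ref{sec:2dom} and~\ref{sec:3dom} in the four-variable setting); deferring them to unspecified ``separate Galois-theoretic arguments'' leaves the proof incomplete.

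For comparison, the paper's proof of Lemma~\ref{lem:three} avoids this machinery entirely. After the dominance reduction and the disposal of the rational cases, it applies Proposition~\ref{prop:prim} to $x_2=2x_1-x_3$ and $x_3=2x_1-x_2$ to obtain $\mathbb{Q}(x_2)=\mathbb{Q}(x_3)\supseteq\mathbb{Q}(x_1)$ with index at most $2$, then invokes the degree bound of \cite[Th.~1.5]{Fowler24} for linear relations among three singular moduli to force $[\mathbb{Q}(x_1):\mathbb{Q}]=2$ and $[\mathbb{Q}(x_2):\mathbb{Q}]\leq 4$ with $\Delta_2=\Delta_3$ in the nontrivial case; the few remaining discriminant configurations are eliminated by Lemma~\ref{lem:bdsing} and a short computation. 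If you wish to pursue your route, you would essentially have to redo the case analysis of Sections~\ref{sec:2dom}--\ref{sec:3dom} for the three-term equation; the field-theoretic route is substantially shorter.
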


	\begin{proof}
		Suppose that $x_1, x_2, x_3$ are pairwise distinct singular moduli such that
		\begin{align}\label{eq:3} 
		2 x_1 = x_2 + x_3.
		\end{align}
		Then $\lvert x_1 \rvert \leq \max \{\lvert x_2 \rvert, \lvert x_3 \rvert\}$. Since we may assume, by taking Galois conjugates, that $x_1$ is dominant, we must have that $\lvert \Delta_1 \rvert < \max \{\lvert \Delta_2 \rvert, \lvert \Delta_3 \rvert\}$ by Proposition~\ref{prop:dombig}. 
		
		Suppose first that $x_1 \in \mathbb Q$. Then
		\[ \mathbb{Q} (x_2 + x_3) = \mathbb{Q}.\]
		Hence, by Proposition~\ref{prop:prim}, either $x_2, x_3 \in \mathbb Q$, or $\Delta_2 = \Delta_3$ and $h(\Delta_2) = 2$. It is straightforward to check in PARI that there are no such solutions to equation \eqref{eq:3}. So we may assume that $x_1 \notin \mathbb Q$.
		
		Suppose that $x_2 \in \mathbb Q$. Then
		\[ \mathbb Q(x_3 - 2 x_1) = \mathbb Q.\]
		Hence, by Proposition~\ref{prop:prim}, we must have that 
		\[ \Delta_1 \neq \Delta_3, \mathbb Q(x_1) = \mathbb Q(x_3) \mbox{ is a degree $2$ extension of } \mathbb Q, \mbox{ and } 2=\frac{x_1 - x_1'}{x_3 - x_3'},\]
		where $x_1', x_3'$ are the non-trivial Galois conjugates of $x_1, x_3$ respectively. It is easy to check in PARI that there are no such $\Delta_1, \Delta_3$. The case where $x_3 \in \mathbb Q$ is analogous. So assume subsequently that $x_1, x_2, x_3 \notin \mathbb Q$. Thus, $\min \{ \lvert \Delta_i \rvert : i=1, 2, 3\} \geq 15$.
		
		By Proposition~\ref{prop:prim}, we thus have that 
		\[\mathbb Q(x_2) = \mathbb Q(2 x_1 - x_3) = \mathbb Q(x_1, x_3)\]
		and 
		\[\mathbb Q(x_3) = \mathbb Q(2 x_1 - x_2) = \mathbb Q(x_1, x_2).\] 
	Hence $x_2 \in \mathbb{Q}(x_3)$ and $x_3 \in \mathbb {Q}(x_2)$, so that $\mathbb{Q}(x_2) = \mathbb{Q}(x_3)$. If $\Delta_2 \neq \Delta_3$, then Proposition~\ref{prop:prim} implies that
	\[ \mathbb Q(x_1) = \mathbb Q(x_2 + x_3) = \mathbb Q(x_2, x_3) = \mathbb{Q}(x_2) = \mathbb{Q}(x_3).\] 
	If $\Delta_2 = \Delta_3$, then, by Proposition~\ref{prop:prim} again,
		\[ \mathbb Q(x_1) = \mathbb Q(x_2 + x_3) \subset \mathbb Q(x_2, x_3) = \mathbb{Q}(x_2) = \mathbb{Q}(x_3)\]
		and 
		\[ \left[\mathbb{Q}\left(x_2\right) : \mathbb{Q}\left(x_1\right)\right] = \left[\mathbb{Q}\left(x_3\right) : \mathbb{Q}\left(x_1\right)\right] \leq 2.\] 
	
		If 
		\[ \mathbb Q (x_1) = \mathbb Q (x_2) = \mathbb Q (x_3),\]
		then equation \eqref{eq:3} and \cite[Th.~1.5]{Fowler24} imply that this field must be a degree $2$ extension of $\mathbb{Q}$. Hence, $\Delta_2 \neq \Delta_3$ in this case, else $x_2 + x_3 \in \mathbb Q$, contradicting the fact that $x_1 \notin \mathbb{Q}$. It is then easy to verify in PARI, by making use of Lemma~\ref{lem:bdsing}, that there are no such solutions to equation \eqref{eq:3}.
		
		So we must have that
		\[ \mathbb Q (x_1) \subsetneq \mathbb Q (x_2) = \mathbb Q (x_3).\]
		By equation \eqref{eq:3} and \cite[Th.~1.5]{Fowler24} again, $x_1$ must be of degree $2$ and $x_2, x_3$ of degree $4$ and $\Delta_2 = \Delta_3$. Applying a suitable Galois automorphism, we may assume from now on that $x_2$ is dominant, and so $x_3$ is not dominant. If $D_1 = D_2$, then, by \cite[Prop.~7.8]{Fowler24}, 
		\[\lvert \Delta_2 \rvert \geq 9 \lvert \Delta_1 \rvert / 4.\] 
		 Hence, by Lemma~\ref{lem:bdsing} and equation \eqref{eq:3}, we must have that
		\[ e^{ \pi \lvert \Delta_2 \rvert^{1/2}} - 2079 \leq 2 ( e^{2 \pi \lvert \Delta_2 \rvert^{1/2}/3} + 2079) + ( e^{ \pi \lvert \Delta_2 \rvert^{1/2}/2}+ 2079).\]
	This implies that $\lvert \Delta_2 \rvert \leq 8$, a contradiction. So we must have that $D_1 \neq D_2$. The possible $(\Delta_1, \Delta_2)$ may then be computed in PARI, as in \cite[Prop.~7.6]{Fowler24}. Applying the bound in Lemma~\ref{lem:bdsing} to equation \eqref{eq:3} for these $(\Delta_1, \Delta_2)$, we see that the only possibility is that $(\Delta_1, \Delta_2) = (-235, -240)$ and, in this case, $x_1$ must be dominant as well. But then $x_1 < 0$ and $x_2>0$, and so  $\lvert 2 x_1 - x_2 \rvert > \lvert x_2 \rvert > \lvert x_3 \rvert$, a contradiction.
	\end{proof}

Thus we may assume subsequently that $x_1, x_2, x_3, x_4$ are pairwise distinct.

\subsection{The case where some \texorpdfstring{$x_i \in \mathbb{Q}$}{xi in Q}}

Suppose $x_i \in \mathbb Q$ for some $i$. The remaining $x_j, x_k, x_l$ then satisfy 
\begin{align}\label{eq:1rat}
\epsilon_1 x_j + \epsilon_2 x_k + \epsilon_3 x_l = x_i
\end{align}
for some $\epsilon_1, \epsilon_2, \epsilon_3 \in \{\pm 1\}$, precisely one of which is $=-1$. Hence, by \cite[Theorem~1.1]{Fowler23}, we must have that either $x_j, x_k, x_l \in \mathbb Q$, or else one of $x_j, x_k, x_l$, say $x_j$, is in $\mathbb Q$ and the other two (i.e.~$x_k, x_l$) are of degree $2$ and conjugate over $\mathbb Q$ with $\epsilon_k = \epsilon_l = 1$ and $\epsilon_j = -1$. In the second case, equation \eqref{eq:1rat} rearranges to
\[x_i + x_j = x_k + x_l.\] 
It is an easy check in PARI to verify that both cases are impossible. Thus we may assume subsequently that $x_1, x_2, x_3, x_4 \notin \mathbb{Q}$. In particular, $\lvert \Delta_i \rvert \geq 15$ for every $i$.

\subsection{Beginning the general case}

We thus have that
\begin{align}\label{eq:pf}
x_1 - x_2 = x_3 - x_4,
\end{align}
where $x_1, x_2, x_3, x_4$ are pairwise distinct, non-rational singular moduli.	We may further assume without loss of generality that
	\[ \lvert \Delta_1 \rvert \geq \lvert \Delta_2 \rvert, \lvert \Delta_3 \rvert, \lvert \Delta_4 \rvert \geq 15.\] 
	By taking Galois conjugates, we may also assume that $x_1$ is dominant. 
	
	We now divide into cases according to how many of $x_2, x_3, x_4$ are dominant. The first two cases are straightforward and are handled below. The remaining two cases require more work and are handled in Sections~\ref{sec:2dom} and \ref{sec:3dom}.

	\subsubsection{None of $x_2,x_3,x_4$ is dominant.}\label{subsec:0dom}
	 Then equation~\eqref{eq:pf} and Lemma \ref{lem:bdsing} imply that
	\begin{align*} 
		e^{\pi \lvert \Delta_1 \rvert^{1/2}}-2079 \leq 3(e^{\pi \lvert \Delta_1 \rvert^{1/2}/2} + 2079),
		 \end{align*}
	and this implies that $\lvert \Delta_1 \rvert \leq 8$, a contradiction.
	
	\subsubsection{Exactly one of $x_2,x_3,x_4$ is dominant.}\label{subsec:1dom}
	
	Say $i \in \{2, 3, 4\}$ is such that $x_i$ is dominant; write $x_k, x_l$ for the remaining two of $x_2, x_3, x_4$. Note that $\lvert \Delta_i \rvert < \lvert \Delta_1 \rvert$, since there is a unique dominant singular modulus of a given discriminant. So
	\begin{align*}
		\lvert x_1 \rvert - \lvert x_i \rvert &\geq  \frac{e^{\pi \lvert \Delta_1 \rvert^{1/2}}}{\lvert \Delta_1 \rvert^{1/2}}  - 2 \cdot 2079,
	\end{align*}
	by Lemmata~\ref{lem:bdsing} and \ref{lem:small}.	On the other hand, Lemma~\ref{lem:bdsing} also implies that
	\begin{align*}
		\lvert x_k \rvert + \lvert x_l \rvert \leq 2 (e^{\pi \lvert \Delta_1 \rvert^{1/2}/2} + 2079)
	\end{align*}
These bounds and equation \eqref{eq:pf} imply that $\lvert \Delta_1 \rvert \leq 10$, a contradiction.

	\section{Exactly two of \texorpdfstring{$x_2, x_3, x_4$}{x2, x3, x4} are dominant}\label{sec:2dom}
	
	Write $x_i, x_j$ for the two dominant singular moduli and $x_k$ for the non-dominant singular modulus among $x_2, x_3, x_4$. Since $x_1, x_i, x_j$ are pairwise distinct and dominant, we have that $\lvert \Delta_i \rvert, \lvert \Delta_j \rvert < \lvert \Delta_1 \rvert$ and $\Delta_i \neq \Delta_j$. Dominant singular moduli are real, so $x_k \in \mathbb{R}$ also. Let $\epsilon_1, \epsilon_2, \epsilon_3 \in \{\pm 1\}$ with $\# \{n : \epsilon_n = -1\} = 1$ be such that
\begin{align}\label{eq:2dom1}
    x_1 = \epsilon_1 x_i + \epsilon_2 x_j + \epsilon_3 x_k.
\end{align}

	\subsection{At least one of \texorpdfstring{$D_i, D_j$}{Di, Dj} is not equal to \texorpdfstring{$D_1$}{D1}.}\label{subsec:2domSameFund}

Suppose that $D_1 = D_i = D_j$. Then, by Lemmata~\ref{lem:bdsing} and \ref{lem:bdfund},
\[ \lvert x_i \rvert + \lvert x_j \rvert + \lvert x_k \rvert \leq 2(0.005 e^{\pi \lvert \Delta_1 \rvert^{1/2}} + 2079) + (e^{\pi \lvert \Delta_1 \rvert^{1/2}/2} + 2079).\]
Since, by Lemma~\ref{lem:bdsing} again,
\[ \lvert x_1 \rvert \geq e^{\pi \lvert \Delta_1 \rvert^{1/2}} - 2079,\]
we thus obtain from equation \eqref{eq:2dom1} that $\lvert \Delta_1 \rvert \leq 8$ and so $x_1 \in \mathbb Q$, a contradiction. So we may assume subsequently that at least one of $D_i, D_j$ is not equal to $D_1$.

\subsection{If \texorpdfstring{$D_i \neq D_1$}{Di neq D1}, then \texorpdfstring{$\Delta_i$}{Delta i} is \texorpdfstring{$2$}{2}-elementary.}

 Suppose that $D_i \neq D_1$. Assume, towards a contradiction, that $\Delta_i$ is not $2$-elementary. Then, by Proposition~\ref{prop:auto}, there exists $\sigma_1 \in \gal(\overline{\mathbb Q} / \mathbb Q)$ such that $\sigma_1(x_i) \neq x_i$ and $\sigma_1(x_1) = x_1$. Write $x_n'$ for $\sigma_1(x_n)$. Note that $x_i'$ is not dominant.  By equation~\eqref{eq:2dom1},
\[  x_1 = \epsilon_1 x_i' + \epsilon_2 x_j' + \epsilon_3  x_k'.\]
If at least one of $x_j', x_k'$ is also not dominant, then we have reduced to one of the cases handled in Sections~\ref{subsec:0dom} and \ref{subsec:1dom}. So we may assume that $x_j', x_k'$ are both dominant. In particular, $x_j' = x_j$ and $x_k' \neq x_k$, and so
\begin{align}\label{eq:2dom2}
    x_1 = \epsilon_1 x_i' + \epsilon_2 x_j + \epsilon_3 x_k'.
\end{align} 

We obtain from equations \eqref{eq:2dom1} and \eqref{eq:2dom2} that
\begin{align}\label{eq:2dom3}
    \epsilon_1 (x_i - x_i') = -\epsilon_3 (x_k - x_k').
\end{align} 
Hence
\[ \mathbb{Q}(x_i - x_i') = \mathbb{Q} (x_k - x_k').\]
So, by Proposition~\ref{prop:prim},
\[ \mathbb{Q}(x_i, x_i') = \mathbb{Q}(x_k, x_k').\]
If $D_i \neq D_k$, then $\mathbb{Q}(x_i) = \mathbb{Q}(x_k)$ by Proposition~\ref{prop:2fields} and so $\Delta_i$ is $2$-elementary by Proposition~\ref{prop:1field}, which contradicts our assumption. So we must have that $D_i = D_k$. Then, by Propositions~\ref{prop:2fields} and \ref{prop:1field}, we have that 
\[\Delta_i / \Delta_k \in \{1, 4, 1/4\}.\] 
Suppose $\Delta_i \neq \Delta_k$. Then $\{\Delta_i, \Delta_k\} = \{\Delta, 4 \Delta\}$ for some $\Delta$. Note that $x_i, x_k'$ are dominant and $x_i', x_k$ are not dominant. Equation~\eqref{eq:2dom3} thus implies, thanks to Lemma~\ref{lem:bdsing}, that
\[ e^{2 \pi \lvert \Delta \rvert^{1/2}} -2079 \leq 3(e^{\pi \lvert \Delta \rvert^{1/2}} + 2079).\]
This however is impossible for all discriminants $\Delta$. So we must have that $\Delta_i = \Delta_k$.

 If $D_j \neq D_1$ and $\Delta_j$ is not $2$-elementary, then we could repeat the above argument with $\Delta_j$ in place of $\Delta_i$ to obtain that $\Delta_j = \Delta_k$ also. This would contradict the fact that $\Delta_i \neq \Delta_j$. Therefore, either $D_j = D_1$ or $\Delta_j$ is $2$-elementary.

Note that $x_k' = x_i$, since both are dominant and $\Delta_i = \Delta_k$. So equation \eqref{eq:2dom3} yields
\[ \epsilon_1 (x_i - x_i') = - \epsilon_3 (x_k - x_i),\]
which rearranges to
\[ x_i( \epsilon_1 - \epsilon_3) = \epsilon_1 x_i' - \epsilon_3 x_k.\]
Since $x_i$ is dominant and $x_i', x_k$ are not dominant, Proposition~\ref{prop:dombig} implies that $\epsilon_1 = \epsilon_3$. So $x_i' = x_k$ and $\epsilon_2 = -1$. Equation~\eqref{eq:2dom1} thus rearranges to
\begin{align}\label{eq:bad}
	x_1 + x_j = x_i + x_k.
\end{align}

We now split into cases, according to whether $\Delta_1$ is $2$-elementary. We will obtain a contradiction in each case, and thus show that $\Delta_i$ must be $2$-elementary.

\subsubsection{The case where $\Delta_1$ is not $2$-elementary}\label{subsub:Delta1not2elem}

Suppose that $\Delta_1$ is not $2$-elementary. Since $D_1 \neq D_i = D_k$, Proposition~\ref{prop:auto} implies that there exists $\sigma_2 \in \gal(\overline{\mathbb{Q}} / \mathbb Q)$ such that $\sigma_2(x_1) \neq x_1$ but $\sigma_2(x_i) = x_i$ and $\sigma_2(x_k) = x_k$. Write $x_1'', x_j''$ for $\sigma_2(x_1), \sigma_2(x_j)$ respectively. Applying $\sigma_2$ to equation \eqref{eq:bad} and subtracting the resulting equation from equation \eqref{eq:bad}, we see that
\begin{align}\label{eq:elimbad}
	 x_1 - x_1'' = x_j'' - x_j.
	 \end{align}
In particular, $x_j'' \neq x_j$. So $x_1'', x_j''$ are not dominant. Hence, by Lemma~\ref{lem:bdsing},
\[ \lvert x_1'' \rvert + \lvert x_j'' \rvert \leq 2(e^{\pi \lvert \Delta_1 \rvert^{1/2}/2} +2079)\]
 On the other hand, by Lemmata~\ref{lem:bdsing} and \ref{lem:small},
\[ \lvert x_1 \rvert - \lvert x_j \rvert \geq \frac{e^{\pi \lvert \Delta_1 \rvert^{1/2}}}{\lvert \Delta_1 \rvert^{1/2}} - 2 \cdot 2079.\]
 These two bounds and equation \eqref{eq:elimbad} then imply that $\lvert \Delta_1 \rvert \leq 10$, which is impossible. Hence, we must have that $\Delta_i$ is $2$-elementary in this case.

\subsubsection{The case where $\Delta_1$ is $2$-elementary}\label{subsub:Delta1Listed}

Suppose that $\Delta_1$ is $2$-elementary. Then, by Proposition~\ref{prop:Tat}, either $\lvert \Delta_1 \rvert \leq 7392$ or $D_1 = D_*$. 
Suppose first that $\lvert \Delta_1 \rvert \leq 7392$. Suppose further that $D_j = D_1$. Since $\Delta_1$ is $2$-elementary and $\lvert \Delta_1 \rvert \leq 7392$, Lemma~\ref{lem:2elemcond} implies that $f_1 \leq 8$. Hence, by Lemmata~\ref{lem:bdsing} and \ref{lem:small},
\[ \lvert x_1 \rvert - \lvert x_i \rvert \geq \frac{e^{\pi \lvert \Delta_1 \rvert^{1/2}}}{8 \lvert D_1 \rvert^{1/2}} - 2 \cdot 2079.\]
Lemmata~\ref{lem:bdsing} and \ref{lem:bdfund} imply that
\[ \lvert x_j \rvert + \lvert x_k \rvert \leq e^{- \pi \lvert D_1 \rvert^{1/2}} e^{\pi \lvert \Delta_1 \rvert^{1/2}} + e^{\pi \lvert \Delta_1 \rvert^{1/2}/2} + 2 \cdot 2079.\]
These two inequalities and equation~\eqref{eq:bad} together imply that
\begin{align*} 
\frac{1}{8 \lvert D_1 \rvert^{1/2}} &\leq  e^{-\pi \lvert D_1 \rvert^{1/2}} + e^{-\pi \lvert \Delta_1 \rvert^{1/2}/2} + (4 \cdot 2079) e^{ - \pi \lvert \Delta_1 \rvert^{1/2}}\\
&\leq  2 e^{-\pi \lvert D_1 \rvert^{1/2}} +8316 e^{-2 \pi \lvert D_1 \rvert^{1/2}},
\end{align*}
where the second inequality holds since $f_1 \geq f_j + 1 \geq 2$. We obtain that $\lvert D_1 \rvert \leq 3$ and hence $\lvert \Delta_1 \rvert \leq 192$. We may then verify in PARI that for any $(\Delta_1, \Delta_i, \Delta_j, \Delta_k)$ satisfying these conditions, the inequality
\begin{equation} \label{eq:4disc1} e^{\pi \lvert \Delta_1 \rvert^{1/2}} - 2079 > (e^{\pi \lvert \Delta_i \rvert^{1/2}} + 2079) + (e^{\pi \lvert \Delta_j \rvert^{1/2}} + 2079) + (e^{\pi \lvert \Delta_k \rvert^{1/2}/2} + 2079)\end{equation}
holds. This though contradicts equation \eqref{eq:bad} by Lemma~\ref{lem:bdsing}.

So we may assume that $D_j \neq D_1$. So, as argued above, $\Delta_j$ must be $2$-elementary. By Lemma~\ref{lem:all2elem}, this implies that $\Delta_i = \Delta_k$ and this discriminant is almost $2$-elementary. Lemma~\ref{lem:bdsing} and equation~\eqref{eq:bad} together imply that 
\begin{equation} \label{eq:4disc2} e^{\pi \lvert \Delta_1 \rvert^{1/2}} - 2079 \leq (e^{\pi \lvert \Delta_i \rvert^{1/2}} + 2079) + (e^{\pi \lvert \Delta_j \rvert^{1/2}} + 2079) + (e^{\pi \lvert \Delta_k \rvert^{1/2}/2} + 2079).\end{equation}
In PARI, we find 19 possibilities for $(\Delta_1, \Delta_i, \Delta_j, \Delta_k)$ satisfying these conditions. For each such possibility, we verify that equation~\eqref{eq:bad} does not hold with $x_1, x_i, x_j$ the dominant singular moduli of respective discriminants $\Delta_1, \Delta_i, \Delta_j$ and $x_k$ any non-dominant, real singular modulus of discriminant $\Delta_k$.

We may therefore assume subsequently that $D_1 = D_*$. Suppose further that $D_j \neq D_1$. Hence $\Delta_j$ is $2$-elementary. Then $\lvert \Delta_j \rvert \leq 7392$ by Proposition~\ref{prop:Tat}. So
\[ \lvert x_j \rvert \leq e^{\pi \sqrt{7392}} + 2079\]
by Lemma~\ref{lem:bdsing}. Also,
\[ \lvert x_1 \rvert - \lvert x_i \rvert - \lvert x_k \rvert \geq \frac{e^{\pi \lvert \Delta_1 \rvert^{1/2}}}{\lvert \Delta_1 \rvert^{1/2}} - e^{\pi \lvert \Delta_1 \rvert^{1/2}/2} -3 \cdot 2079\]
by Lemmata~\ref{lem:small} and \ref{lem:bdsing}. We thus obtain from equation \eqref{eq:bad} that $\lvert \Delta_1 \rvert \leq 7638$. Hence, $\lvert D_* \rvert \leq 7638$, which contradicts Proposition~\ref{prop:Tat}.

So $D_j = D_1 = D_*$. Hence, by Lemmata~\ref{lem:bdsing} and \ref{lem:bdfund},
\begin{align*}
	 \lvert x_j \rvert + \lvert x_k \rvert &\leq e^{\pi \lvert \Delta_1 \rvert^{1/2}} e^{- \pi \lvert D_* \rvert^{1/2}} + e^{\pi \lvert \Delta_1 \rvert^{1/2}/2} + 2 \cdot 2079\\
	 &\leq e^{\pi \lvert \Delta_1 \rvert^{1/2}} (e^{- \pi \lvert D_* \rvert^{1/2}} + e^{- \pi \lvert \Delta_1 \rvert^{1/2}/2} + 4158 e^{- \pi \lvert \Delta_1 \rvert^{1/2}}).
	 \end{align*}
 By Lemmata~\ref{lem:bdsing} and \ref{lem:small}, we also have that
\begin{align*}
	 \lvert x_1 \rvert - \lvert x_i \rvert &\geq \frac{e^{\pi \lvert \Delta_1 \rvert^{1/2}}}{\lvert \Delta_1 \rvert^{1/2}} - 2 \cdot 2079 = e^{\pi \lvert \Delta_1 \rvert^{1/2}} \left(\frac{1}{\lvert \Delta_1 \rvert^{1/2}} -4158 e^{-\pi \lvert \Delta_1 \rvert^{1/2}}\right).
	 \end{align*}
 Thus, equation~\eqref{eq:bad} implies that
 \[\frac{1}{\lvert \Delta_1 \rvert^{1/2}} -4158 e^{-\pi \lvert \Delta_1 \rvert^{1/2}} \leq e^{- \pi \lvert D_* \rvert^{1/2}} + e^{- \pi \lvert \Delta_1 \rvert^{1/2}/2} + 4158 e^{- \pi \lvert \Delta_1 \rvert^{1/2}}.\]
 Since $\Delta_1$ is $2$-elementary, Lemma~\ref{lem:2elemcond} implies that $ \lvert D_* \rvert^{1/2} \leq \lvert \Delta_1 \rvert^{1/2} \leq 60 \lvert D_* \rvert^{1/2}$. So,
 \[ \frac{1}{60 \lvert D_* \rvert^{1/2}} \leq (2 \times 4158 +1) e^{- \pi \lvert D_* \rvert^{1/2}} + e^{- \pi \lvert D_* \rvert^{1/2}/2}. \]
 We thus obtain that $\lvert D_* \rvert \leq 22$, which contradicts Proposition~\ref{prop:Tat}. Hence, we must have that $\Delta_i$ is $2$-elementary in this case too.

\subsection{Synthesis}

Therefore, if $\Delta_i$ is not $2$-elementary, then $D_i = D_1$. If $\Delta_i$ is $2$-elementary, then, by Proposition~\ref{prop:Tat}, either $\lvert \Delta_i \rvert \leq 7392$ or $D_i = D_*$. The same holds for $\Delta_j$. Hence, at least one of $\Delta_i, \Delta_j$ is $2$-elementary, since $D_1, D_i, D_j$ are not all equal. There are thus the following 8 cases to consider.

\begin{enumerate}
	\item $\Delta_i, \Delta_j$ are both $2$-elementary and $\lvert \Delta_i \rvert, \lvert \Delta_j \rvert  \leq 7392$;
		\item $\Delta_i, \Delta_j$ are both $2$-elementary, $\lvert \Delta_i \rvert \leq 7392$, and $D_j = D_*$;
			\item $\Delta_i, \Delta_j$ are both $2$-elementary, $D_i = D_*$, and $\lvert \Delta_j \rvert \leq 7392$;
	\item $\Delta_i, \Delta_j$ are both $2$-elementary and $D_i = D_j = D_*$;
	\item $\Delta_i$ is $2$-elementary, $\Delta_j$ is not $2$-elementary, $\lvert \Delta_i \rvert \leq 7392$, and $D_j = D_1$;
	\item $\Delta_i$ is not $2$-elementary, $\Delta_j$ is $2$-elementary, $D_i = D_1$, and $\lvert \Delta_j \rvert \leq 7392$;
	\item $\Delta_i$ is $2$-elementary, $\Delta_j$ is not $2$-elementary, $D_i = D_*$, and $D_j = D_1$;
	\item  $\Delta_i$ is not $2$-elementary, $\Delta_j$ is $2$-elementary, $D_i = D_1$, and $D_j = D_*$.
\end{enumerate}

We will now obtain a contradiction in each case. We do not treat cases (3), (6), and (8), because they are evidently analogous to cases (2), (5), and (7) respectively. 

Note that if $\Delta_i, \Delta_j$ are both $2$-elementary, then, by Lemma~\ref{lem:all2elem},  either $\Delta_1, \Delta_k$ are $2$-elementary, or $\Delta_1 = \Delta_k$ and this discriminant is almost $2$-elementary and also 
\[ x_1 + x_k = x_i + x_j.\]

\subsubsection{Case 1.}\label{subsub:2domcase1} Lemma~\ref{lem:bdsing} implies that
\[ \lvert x_i \rvert + \lvert x_j \rvert \leq 2(e^{\pi \sqrt{7392}}+2079)\]
and
\[ \lvert x_1 \rvert - \lvert x_k \rvert \geq (e^{\pi \lvert \Delta_1 \rvert^{1/2}} - 2079) - (e^{\pi \lvert \Delta_1 \rvert^{1/2}/2} + 2079).\]
We thus obtain from equation \eqref{eq:pf} that $\lvert \Delta_1 \rvert \leq 7429$.

Lemma~\ref{lem:bdsing} and equation~\eqref{eq:pf} imply that 
\[ e^{\pi \lvert \Delta_1 \rvert^{1/2}} - 2079 \leq (e^{\pi \lvert \Delta_i \rvert^{1/2}} + 2079) + (e^{\pi \lvert \Delta_j \rvert^{1/2}} + 2079) + (e^{\pi \lvert \Delta_k \rvert^{1/2}/2} + 2079).\]
We may find in PARI all the possibilities for $(\Delta_1, \Delta_i, \Delta_j, \Delta_k)$ satisfying these conditions with $\lvert \Delta_1 \rvert \leq 7429$. For each such $(\Delta_1, \Delta_i, \Delta_j, \Delta_k)$, we may then verify that equation \eqref{eq:pf} does not hold whenever $x_1, x_i, x_j$ are the dominant singular moduli of respective discriminants $\Delta_1, \Delta_i, \Delta_j$ and $x_k$ is any non-dominant, real singular modulus of discriminant $\Delta_k$.

\subsubsection{Case 2.} By Lemmata~\ref{lem:bdsing} and \ref{lem:small},
\[ \lvert x_1 \rvert - \lvert x_j  \rvert \geq \frac{e^{\pi \lvert \Delta_1 \rvert^{1/2}}}{\lvert \Delta_1 \rvert^{1/2}} - 2 \cdot 2079\]
and, by Lemma~\ref{lem:bdsing},
\[ \lvert x_i \rvert + \lvert x_k \rvert  \leq e^{\pi \sqrt{7392}} + e^{\pi \lvert \Delta_1 \rvert^{1/2}/2} +2 \cdot 2079.\]
We obtain from equation \eqref{eq:pf} that $\lvert \Delta_1 \rvert \leq 7638$. In particular, $\lvert D_* \rvert \leq \lvert \Delta_j \rvert \leq 7638$, which contradicts Proposition~\ref{prop:Tat}.

\subsubsection{Case 4.} Without loss of generality, assume that $\lvert \Delta_j \rvert < \lvert \Delta_i \rvert$. So, by Lemmata~\ref{lem:small}, \ref{lem:bdfund}, and \ref{lem:bdsing},
\[ \lvert x_1 \rvert - \lvert x_i \rvert - \lvert x_j \rvert - \lvert x_k \rvert \geq \frac{e^{\pi \lvert \Delta_1 \rvert^{1/2}}}{\lvert \Delta_1 \rvert^{1/2}} - e^{-\pi \lvert D_* \rvert^{1/2}} e^{\pi \lvert \Delta_i \rvert^{1/2}} - e^{\pi \lvert \Delta_1 \rvert^{1/2}/2} -4 \cdot 2079.\]
Hence, equation \eqref{eq:pf} implies that
\[ e^{-\pi \lvert D_* \rvert^{1/2}} e^{\pi \lvert \Delta_i \rvert^{1/2}} \geq e^{\pi \lvert \Delta_1 \rvert^{1/2}} \left(\frac{1}{\lvert \Delta_1 \rvert^{1/2}} - e^{-\pi \lvert \Delta_1 \rvert^{1/2}/2} - 8316 e^{-\pi \lvert \Delta_1 \rvert^{1/2}}\right).\]

Note that
\[ \frac{1}{\lvert \Delta_1 \rvert^{1/2}} - e^{-\pi \lvert \Delta_1 \rvert^{1/2}/2} - 8316 e^{-\pi \lvert \Delta_1 \rvert^{1/2}} \geq \frac{1}{2 \lvert \Delta_1 \rvert^{1/2}}\]
if $\lvert \Delta_1 \rvert \geq 13$. Since $\lvert \Delta_1 \rvert \geq 15$, we thus must have that
\begin{align*}
e^{-\pi \lvert D_* \rvert^{1/2}} e^{\pi \lvert \Delta_i \rvert^{1/2}} \geq  \frac{e^{\pi \lvert \Delta_1 \rvert^{1/2}}}{2 \lvert \Delta_1 \rvert^{1/2}} .
\end{align*}
The function $g(t) = \exp(\pi \sqrt{t})/2 \sqrt{t}$ is increasing for $t>1/\pi^2$. So
\begin{align*}
	e^{-\pi \lvert D_* \rvert^{1/2}} e^{\pi \lvert \Delta_i \rvert^{1/2}} \geq  \frac{e^{\pi \lvert \Delta_i \rvert^{1/2}}}{2 \lvert \Delta_i \rvert^{1/2}}.
\end{align*}
 Since $\Delta_i$ is $2$-elementary, $f_i \leq 60$ by Lemma~\ref{lem:2elemcond}, and so $\lvert \Delta_i \rvert^{1/2} \leq 60 \lvert D_* \rvert^{1/2}$. So
\[	e^{-\pi \lvert D_* \rvert^{1/2}} \geq  \frac{1}{2 \lvert \Delta_i \rvert^{1/2}} \geq \frac{1}{2 \cdot 60 \lvert D_* \rvert^{1/2}},\]
which is impossible, since $\lvert D_* \rvert \geq 3$.

\subsubsection{Case 5.}\label{subsub:2domcase2} By Lemma~\ref{lem:bdsing}, we have that
\[ \lvert x_1 \rvert > e^{\pi \lvert \Delta_1 \rvert^{1/2}} - 2079\]
and also, thanks to Lemma~\ref{lem:bdfund},
\[ \lvert x_i \rvert + \lvert x_j \rvert + \lvert x_k \rvert \leq  e^{\pi \sqrt{7392}} + 0.005 e^{\pi \lvert \Delta_1 \rvert^{1/2}} + e^{\pi \lvert \Delta_1 \rvert^{1/2}/2} + 3 \cdot 2079.\]
We thereby obtain from equation \eqref{eq:pf} that $\lvert \Delta_1 \rvert \leq 7392$. 

Equation~\eqref{eq:pf} and Lemmata~\ref{lem:bdsing} and \ref{lem:bdfund} imply that
\[ e^{\pi \lvert \Delta_1 \rvert^{1/2}} - 2079 < (e^{\pi \lvert \Delta_i \rvert^{1/2}} + 2079) + (0.005e^{\pi \lvert \Delta_1 \rvert^{1/2}} + 2079) + (e^{\pi \lvert \Delta_k \rvert^{1/2} /2} + 2079).\]
Thus,
\[ 0.995 e^{\pi \lvert \Delta_1 \rvert^{1/2}} < e^{\pi \lvert \Delta_i \rvert^{1/2}} + e^{\pi \lvert \Delta_k \rvert^{1/2} /2} + 8316.\]
Since $\lvert \Delta_k \rvert \leq \lvert \Delta_1 \rvert$, we have that
\[ e^{\pi \lvert \Delta_1 \rvert^{1/2}} (0.995 - e^{- \pi \lvert \Delta_1 \rvert^{1/2} /2} -8316 e^{- \pi \lvert \Delta_1 \rvert^{1/2}}) < e^{\pi \lvert \Delta_i \rvert^{1/2}}.\]

The discriminants $\Delta_1, \Delta_i, \Delta_j$ are pairwise distinct and are all $\geq 15$ in absolute value. So $\lvert \Delta_1 \rvert \geq 19$, since discriminants are $\equiv 0, 1 \bmod 4$. So
\[ 0.995 - e^{- \pi \lvert \Delta_1 \rvert^{1/2} /2} -8316 e^{- \pi \lvert \Delta_1 \rvert^{1/2}} > 0.995 - e^{- \pi \sqrt{19}/2} -8316 e^{-\pi \sqrt{19}} > 0.9845.\]
Hence,
\[ 0.9845 e^{\pi \lvert \Delta_1 \rvert^{1/2}} < e^{\pi \lvert \Delta_i \rvert^{1/2}}.\]
Thus,
\[ \lvert \Delta_1 \rvert^{1/2} - \lvert \Delta_i \rvert^{1/2} < \frac{- \log(0.9845)}{\pi} < 0.005.\]
We may therefore obtain that
\[ \lvert \Delta_1 \rvert < \lvert \Delta_i \rvert + 0.01 \lvert \Delta_i \rvert^{1/2} + 0.000025 < \lvert \Delta_i \rvert + 1,\]
since $\lvert \Delta_i \rvert^{1/2} \leq \sqrt{7392} < 86$. This though contradicts the fact that $\lvert \Delta_1 \rvert \geq \lvert \Delta_i \rvert + 1$.

\subsubsection{Case 7.} Since $\Delta_1 \neq \Delta_j$, Lemma~\ref{lem:all2elem} implies that $\Delta_k$ is not $2$-elementary. Note also that $D_1 \neq D_*$, since $D_1, D_i, D_j$ are not all equal. So, if $\Delta_1$ is $2$-elementary, then $\lvert \Delta_1 \rvert \leq 7392$ by Proposition~\ref{prop:Tat}. But then $\lvert D_* \rvert = \lvert D_i \rvert \leq 7392$, which contradicts Proposition~\ref{prop:Tat}. So we may assume that $\Delta_1$ is not $2$-elementary. 

Suppose that $D_k \neq D_1$. 
Since $\Delta_k$ is not $2$-elementary, there exists, by Proposition~\ref{prop:auto}, $\sigma_1 \in \gal(\overline{\mathbb{Q}} / \mathbb Q)$ such that $\sigma_1(x_k) \neq x_k$ but $\sigma_1(x_1) = x_1$ and $\sigma_1(x_j) = x_j$. Writing $x_n'$ for $\sigma_1(x_n)$, we obtain from equation~\eqref{eq:2dom1} that
\[ \epsilon_1 (x_i - x_i') = -\epsilon_3 (x_k - x_k').\]
We may now obtain a contradiction by arguing as in the paragraph following equation \eqref{eq:2dom2}. If $D_i \neq D_k$, then $\Delta_k$ is $2$-elementary, a contradiction. If $D_i = D_k$, then $\Delta_i / \Delta_k \in \{1, 4, 1/4\}$. If $\Delta_i \neq \Delta_k$, then we obtain a bound $\max \{ \lvert \Delta_i \rvert, \lvert \Delta_k \rvert\} \leq 11$, which is impossible. So we must have that $\Delta_i = \Delta_k$, and this contradicts the fact that $\Delta_k$ is not $2$-elementary. 

So $D_k = D_1 = D_j$. By Proposition~\ref{prop:auto}, there exists $\sigma_2 \in \gal(\overline{\mathbb Q} / \mathbb{Q})$ such that $\sigma_2(x_1) \neq x_1$ but $\sigma_2(x_i) = x_i$. We may thus obtain from equation~\eqref{eq:2dom1} that
\begin{align}\label{eq:elim1}
	x_1 - x_1'' = \epsilon_2 (x_j - x_j'') + \epsilon_3 (x_k - x_k''),
\end{align}
where $x_n''$ denotes $\sigma_2(x_n)$. In particular, $x_1''$ is not dominant. Lemma~\ref{lem:bdsing} implies that
\begin{align}\label{eq:x1bd}
    \lvert x_1 \rvert > e^{\pi \lvert \Delta_1 \rvert^{1/2}} - 2079.
\end{align} 
Suppose that $\Delta_k \neq \Delta_1$. Then, by Lemma~\ref{lem:bdfund}, we also have that
\[ \lvert x_1'' \rvert + \lvert x_j \rvert + \lvert x_j'' \rvert + \lvert x_k \rvert + \lvert x_k'' \rvert \leq (e^{\pi \lvert \Delta_1 \rvert^{1/2}/2} + 2079) + 4 (0.005e^{\pi \lvert \Delta_1 \rvert^{1/2}} + 2079).\]
These bounds and equation \eqref{eq:elim1} imply that $\lvert \Delta_1 \rvert \leq 9$, which is impossible. 

So $\Delta_k = \Delta_1$. By Proposition~\ref{prop:auto}, there exists $\sigma_3 \in \gal(\overline{\mathbb Q} / \mathbb{Q})$ such that $\sigma_3(x_j) \neq x_j$ but $\sigma_3(x_i) = x_i$. Equation~\eqref{eq:2dom1} thus yields that
\begin{align}\label{eq:elim}
	x_1 - x_1''' = \epsilon_2 (x_j - x_j''') + \epsilon_3 (x_k - x_k'''),
\end{align}
where $x_n'''$ denotes $\sigma_3(x_n)$. Suppose that $x_k = x_k'''$. Then $x_1 \neq x_1'''$, since $x_j \neq x_j'''$. So $x_1'''$ is not dominant. Lemmata~\ref{lem:bdsing} and \ref{lem:bdfund} imply that
\[ \lvert x_1 \rvert \geq e^{\pi \lvert \Delta_1 \rvert^{1/2}} - 2079\]
and
\[ \lvert x_1''' \rvert + \lvert x_j \rvert + \lvert x_j''' \rvert \leq (e^{\pi \lvert \Delta_1 \rvert^{1/2}/2} + 2079) + 2 (0.005 e^{\pi \lvert \Delta_1 \rvert^{1/2}} + 2079).\]
These bounds and equation~\eqref{eq:elim} then imply that  $\lvert \Delta_1 \rvert \leq 8$, which is impossible. So $x_k \neq x_k'''$. If $x_1 = x_1'''$, then we may argue similarly (applying first a suitable automorphism to \eqref{eq:elim} to make $x_k$ dominant) and obtain a contradiction. So $x_1 \neq x_1'''$ as well. In particular, $x_1'''$ is not dominant.

If $x_k'''$ is not dominant, then Lemmata~\ref{lem:bdsing} and \ref{lem:bdfund} imply that
\[ \lvert x_1''' \rvert + \lvert x_j \rvert + \lvert x_j''' \rvert + \lvert x_k \rvert + \lvert x_k''' \rvert \leq 3(e^{\pi \lvert \Delta_1 \rvert^{1/2}/2} + 2079) + 2 (0.005 e^{\pi \lvert \Delta_1 \rvert^{1/2}} + 2079).\]
Equation~\eqref{eq:elim} and the bound \eqref{eq:x1bd} then imply that $\lvert \Delta_1 \rvert \leq 9$, which is impossible.

So assume that $x_k'''$ is dominant, i.e.~$x_k''' = x_1$. Lemmata~\ref{lem:bdsing} and \ref{lem:bdfund} then yield that
\[ \lvert x_1''' \rvert + \lvert x_j \rvert + \lvert x_j''' \rvert + \lvert x_k \rvert \leq 2(e^{\pi \lvert \Delta_1 \rvert^{1/2}/2} + 2079) + 2 (0.005 e^{\pi \lvert \Delta_1 \rvert^{1/2}} + 2079).\]
If $\epsilon_3 = 1$, then 
\[ \lvert x_1 + \epsilon_3 x_k'' \rvert \geq 2 (e^{\pi \lvert \Delta_1 \rvert^{1/2}} - 2079).\]
And so equation~\eqref{eq:elim} implies that $\lvert \Delta_1 \rvert \leq 7$, a contradiction. If $\epsilon_3 = -1$, then $\epsilon_2 = 1$ and equation~\eqref{eq:elim} rearranges to 
\begin{align}\label{eq:badbad}
x_k - x_1''' = x_j - x_j'''.
\end{align}
Since $x_j \neq x_j'''$, we have that $x_k \neq x_1'''$. Taking suitable Galois conjugates, we may now assume that $x_k$ is dominant, and so $x_1'''$ is not dominant. Thus, by Lemma~\ref{lem:bdsing},
\[ \lvert x_k \rvert - \lvert x_1''' \rvert \geq (e^{\pi \lvert \Delta_1 \rvert^{1/2}} - 2079) - (e^{\pi \lvert \Delta_1 \rvert^{1/2}/2} + 2079),\]
while, by Lemma~\ref{lem:bdfund},
\[ \lvert x_j \rvert + \lvert x_j''' \rvert \leq 2(0.005 e^{\pi \lvert \Delta_1 \rvert^{1/2}} + 2079).\]
These bounds and equation \eqref{eq:badbad} together imply that $\lvert \Delta_1 \rvert \leq 8$, a contradiction.

	\section{All of \texorpdfstring{$x_2, x_3, x_4$}{x2, x3, x4} are dominant}\label{sec:3dom}
	
In this case, $\Delta_1, \Delta_2, \Delta_3, \Delta_4$ are pairwise distinct, since $x_1, x_2, x_3, x_4$ are dominant. 
	
	\subsection{Equal fundamental discriminants}\label{subsec:3domSameFund}
	
	If $D_1 = D_2 = D_3 = D_4$, then Lemmata~\ref{lem:bdsing} and \ref{lem:bdfund} and equation \eqref{eq:pf} together imply that
	\[ e^{\pi \lvert \Delta_1 \rvert^{1/2}} - 2079 \leq 3\,(0.005 e^{\pi \lvert \Delta_1 \rvert^{1/2}} + 2079),\]
	and hence $\lvert \Delta_1 \rvert \leq 8$, a contradiction. We may thus assume that the fundamental discriminants $D_1, D_2, D_3, D_4$ are not all equal.
	
	\subsection{An inductive argument}\label{subsec:ind}
	
	Let $i \in \{2, 3, 4\}$ be such that $D_i \neq D_1$. Suppose that $\Delta_i$ is not $2$-elementary. Then, by Proposition~\ref{prop:auto}, there exists $\sigma \in \gal(\overline{\mathbb Q} / \mathbb Q )$ such that $\sigma(x_i) \neq x_i$ and $\sigma(x_1) = x_1$. In particular, $\sigma(x_i)$ is not dominant, and so applying $\sigma$ to equation \eqref{eq:pf} reduces us to one of the previous cases where at most two of $x_2, x_3, x_4$ are dominant. We may thus assume subsequently that, for every $i \in \{ 2, 3, 4\}$, either $D_i = D_1$ or $\Delta_i$ is $2$-elementary. In particular, at least one of $\Delta_2, \Delta_3, \Delta_4$ is $2$-elementary, since $D_1, D_2, D_3, D_4$ are not all equal. 
	
	\subsection{The case where \texorpdfstring{$\Delta_1$}{Delta 1} is not \texorpdfstring{$2$}{2}-elementary}
	
	 Suppose $\Delta_1$ is not $2$-elementary. Then, by Lemma~\ref{lem:all2elem}, at most one of $\Delta_2, \Delta_3, \Delta_4$ is $2$-elementary, since the four discriminants are pairwise distinct. Therefore, there exists a unique $i \in \{2, 3, 4\}$ such that $D_i \neq D_1$, since $\Delta_i$ is $2$-elementary for any such $i$ by Section~\ref{subsec:ind}. For the remaining $j, k \in \{2, 3, 4\}$, we have that $D_j = D_k = D_1$ and hence, by Lemma~\ref{lem:bdfund},
	\begin{align}\label{eq:all4dom}
	\lvert x \rvert \leq 0.005 e^{\pi \lvert \Delta_1 \rvert^{1/2}} + 2079
	\end{align}
	for every singular modulus $x$ of discriminant $\Delta_j$ or $\Delta_k$.
	
	Since $\Delta_1$ is not $2$-elementary, by Proposition~\ref{prop:auto} there exists $\sigma \in \gal(\overline{\mathbb Q}/\mathbb{Q})$ such that $\sigma(x_1) \neq x_1$ and $\sigma(x_i) = x_i$. In particular, $\sigma(x_1)$ is not dominant. Write $x_n'$ for $\sigma(x_n)$. There are $\epsilon_1, \epsilon_2, \epsilon_3 \in \{\pm 1\}$ such that
	\[ x_1 = \epsilon_1 x_i + \epsilon_2 x_j + \epsilon_3 x_k\]
	and
	\[ x_1' = \epsilon_1 x_i + \epsilon_2 x_j' + \epsilon_3 x_k'.\]
	Hence,
	\[ x_1 =  x_1' + \epsilon_2 (x_j - x_j') + \epsilon_3 (x_k - x_k').\]
	This equation,  Lemma~\ref{lem:bdsing}, and equation \eqref{eq:all4dom} together imply that
	\[ e^{\pi \lvert \Delta_1 \rvert^{1/2}} - 2079 \leq (e^{\pi \lvert \Delta_1 \rvert^{1/2}/2} + 2079) + 4 (0.005 e^{\pi \lvert \Delta_1 \rvert^{1/2}} + 2079).\]
	From this we obtain that $\lvert \Delta_1 \rvert \leq 9$, a contradiction.
	 
	 \subsection{The case where \texorpdfstring{$\Delta_1$}{Delta 1} is \texorpdfstring{$2$}{2}-elementary}\label{subsec:D12elem}
	
	So we may assume subsequently that $\Delta_1$ is $2$-elementary. Since the fundamental discriminants $D_1, D_2, D_3, D_4$ are not all equal, there exists $i \in \{2, 3, 4\}$ such that $D_i \neq D_1$. Then $\Delta_i$ is $2$-elementary by Section~\ref{subsec:ind}. Therefore, Lemma~\ref{lem:all2elem} implies that all of $\Delta_1, \Delta_2, \Delta_3, \Delta_4$ must be $2$-elementary, since they are pairwise distinct.
	
	Suppose $D_1 = D_*$. Then, for each $j \geq 2$, either
	\[ D_j = D_1 \mbox{ and } \lvert x_j \rvert \leq 0.005 e^{\pi \lvert \Delta_1 \rvert^{1/2}} + 2079\]
	by Lemmata~\ref{lem:bdsing} and \ref{lem:bdfund}, or
	\[ D_j \neq D_1 \mbox{ and } \lvert \Delta_j \rvert \leq 7392\]
	by Proposition~\ref{prop:Tat}. These bounds, Lemma~\ref{lem:bdsing}, and equation \eqref{eq:pf} then imply that $\lvert \Delta_1 \rvert \leq 7452$ and thus $\lvert D_* \rvert \leq 7452$, which contradicts Proposition~\ref{prop:Tat}. 
	
	So $D_1 \neq D_*$. Hence, $\lvert \Delta_1 \rvert \leq 7392$ by Proposition~\ref{prop:Tat}. Further, Lemma~\ref{lem:bdsing} and equation~\eqref{eq:pf} imply that
	\begin{equation}\label{eq:4disc3} e^{\pi \lvert \Delta_1 \rvert^{1/2}} - 2079 \leq (e^{\pi \lvert \Delta_2 \rvert^{1/2}} +2079) + (e^{\pi \lvert \Delta_3 \rvert^{1/2}} + 2079) + (e^{\pi \lvert \Delta_4 \rvert^{1/2}} + 2079)\end{equation}
	We may then find in PARI all the possibile $(\Delta_1, \Delta_2, \Delta_3, \Delta_4)$ satisfying the above conditions and, for each possibility, verify that equation \eqref{eq:pf} does not hold for the corresponding dominant singular moduli.

	\bigskip
	\textbf{Acknowledgments.} This work was supported by ANR project JINVARIANT.


\begin{thebibliography}{99}
	
	\bibitem{AllombertBiluMadariaga15}
	B.~Allombert, Yu. Bilu, and A.~Pizarro-Madariaga, \emph{C{M}-points on straight
		lines}, Analytic number theory, Springer, Cham, 2015, pp.~1--18.
		
		\bibitem{Andre98}
Y.~Andr\'{e}, \emph{Finitude des couples d'invariants modulaires singuliers sur
  une courbe alg\'{e}brique plane non modulaire}, J. Reine Angew. Math.
  \textbf{505} (1998), 203--208.
  
  \bibitem{AslanyanEterovicFowler23}
V.~Aslanyan, S.~Eterović, and G.~Fowler, \emph{Multiplicative relations among
  differences of singular moduli}, preprint, ar{X}iv:2308.12244v2
  (2023).
	
	\bibitem{BiluFayeZhu19}
	Yu. Bilu, B.~Faye, and H.~Zhu, \emph{Separating singular moduli and the
		primitive element problem}, Q. J. Math. \textbf{71} (2020), no.~4,
	1253--1280.
	
	\bibitem{BiluGunTron22}
Yu. Bilu, S.~Gun, and E.~Tron, \emph{Effective multiplicative independence of 3
  singular moduli}, preprint, \texttt{arXiv:2207.05183v3} (2022).
  
  \bibitem{BiluHabeggerKuehne20} Yu.~Bilu, P.~Habegger, and L.~Kühne, \emph{No singular modulus is a unit}, Int. Math. Res. Not. IMRN (2020), no. 24, 10005--10041.
  
  \bibitem{BiluKuhne20}
Yu. Bilu and L.~K\"{u}hne, \emph{Linear {E}quations in {S}ingular {M}oduli},
  Int. Math. Res. Not. IMRN (2020), no.~21, 7617--7643.
  
  \bibitem{BiluLucaMasser17}
Yu. Bilu, F.~Luca, and D.~Masser, \emph{Collinear {CM}-points}, Algebra \&
  Number Theory \textbf{11} (2017), no.~5, 1047--1087.
	
	\bibitem{BiluMasserZannier13}
	Yu. Bilu, D.~Masser, and U.~Zannier, \emph{An effective ``theorem of
		{A}ndr\'{e}'' for {CM}-points on a plane curve}, Math. Proc. Cambridge
	Philos. Soc. \textbf{154} (2013), no.~1, 145--152.
	
	\bibitem{Binyamini19}
G.~Binyamini, \emph{Some effective estimates for {A}ndr\'{e}--{O}ort in
  {$Y(1)^n$}}, J. Reine Angew. Math. \textbf{767} (2020), 17--35, with an
  appendix by E. Kowalski.
  
  \bibitem{Breuer01}
F.~Breuer, \emph{Heights of {CM} points on complex affine curves}, Ramanujan J.
  \textbf{5} (2001), no.~3, 311--317.
	
	\bibitem{Campagna20}
	F. Campagna, \emph{On singular moduli that are $S$-units}, Manuscripta Math. \textbf{166} (2021), 73--90.
	
	
	\bibitem{Cohn94}
	H.~Cohn, \emph{Introduction to the construction of class fields}, Dover
	Publications, Inc., New York, 1994, Corrected reprint of the 1985 original.
	
	\bibitem{Cox89}
D.~Cox, \emph{Primes of the form {$x^2 + ny^2$}}, John Wiley \& Sons, Inc., New
  York, 1989.
  
  \bibitem{Edixhoven98}
B.~Edixhoven, \emph{Special points on the product of two modular curves},
  Compositio Math. \textbf{114} (1998), no.~3, 315--328.
	
	\bibitem{FayeRiffaut18}
	B.~Faye and A.~Riffaut, \emph{Fields generated by sums and products of singular
		moduli}, J. Number Theory \textbf{192} (2018), 37--46.
	
	\bibitem{Fowler23}
	G.~Fowler, \emph{Equations in three singular moduli: the equal exponent case},
	J. Number Theory \textbf{243} (2023), 256--297.
	
	\bibitem{Fowler24}
	G.~Fowler, \emph{Some uniform effective results on Andr\'e--Oort for sums of
		powers in $\mathbb{C}^n$}, preprint, \texttt{arXiv:2405.06456v1} (2024).
		
		\bibitem{FuZhao24}
Y.~Fu and R.~Zhao, \emph{Algebraic independence of special points on {S}himura
  varieties}, preprint, \texttt{ar{X}iv:2405.14084v2} (2024).
		
		\bibitem{GrossZagier85}
B.~Gross and D.~Zagier, \emph{On singular moduli}, J. Reine Angew. Math.
  \textbf{355} (1985), 191--220.
  
  \bibitem{HerreroMenaresRiveraletelier24}
S. Herrero, R. Menares, and J. Rivera-Letelier, \emph{There are at most finitely many singular moduli that are $S$-units}, Compos. Math. \textbf{160}.4 (2024), 732--770.
  
		
		\bibitem{Kuhne13}
L.~K{\"{u}}hne, \emph{An effective result of {A}ndr\'{e}--{O}ort type {II}},
  Acta Arith. \textbf{161} (2013), no.~1, 1--19.
	
	\bibitem{Kuhne21}
	L.~K\"{u}hne, \emph{Intersection of class fields}, Acta Arith. \textbf{198}
	(2021), no.~2, 109--127.
	
	\bibitem{Lang02}
	S.~Lang, \emph{Algebra}, third ed., Graduate Texts in Mathematics, vol. 211,
	Springer-Verlag, New York, 2002.
	
	   \bibitem{LauterViray15} K.~Lauter and B.~Viray, \emph{On singular moduli for arbitrary discriminants}, Int. Math. Res. Not. IMRN (2015), no.~19, 9206--9250.
	
	\bibitem{Li21}
Y.~Li, \emph{Singular units and isogenies between {CM} elliptic curves},
  Compos. Math. \textbf{157} (2021), no.~5, 1022--1035.
	
	    \bibitem{PARI2}
    The PARI~Group, PARI/GP version \texttt{2.18.0}, Univ. Bordeaux, 2024,
    \url{http://pari.math.u-bordeaux.fr/}.
	
	\bibitem{Pila11}
J.~Pila, \emph{O-minimality and the {A}ndr\'e--{O}ort conjecture for {$\mathbb{
  C}^n$}}, Ann. of Math. (2) \textbf{173} (2011), no.~3, 1779--1840.
  
  \bibitem{Pila14a}
J.~Pila, \emph{Special point problems with elliptic modular surfaces},
  Mathematika \textbf{60} (2014), no.~1, 1--31.
  
  \bibitem{PilaTsimerman17}
J.~Pila and J.~Tsimerman, \emph{Multiplicative relations among singular
  moduli}, Ann. Sc. Norm. Super. Pisa Cl. Sci. (5) \textbf{17} (2017), no.~4,
  1357--1382.

\bibitem{Scanlon04}
T.~Scanlon, \emph{Automatic uniformity}, Int. Math. Res. Not. (2004), no.~62,
  3317--3326.
	
	\bibitem{Siegel35}
C.~Siegel, \emph{{\"U}ber die {Classenzahl} quadratischer {Zahlk{\"o}rper}},
  Acta Arith. \textbf{1} (1935), 83--86.
	
	\bibitem{Tatuzawa51}
T.~Tatuzawa, \emph{On a theorem of {S}iegel}, Jpn. J. Math. \textbf{21} (1951),
  163--178.
	
\end{thebibliography}
\end{document}